\newtheorem{teo}{Theorem}
\newtheorem{lem}{Lemma}
\newtheorem{deff}{Definition}
\newtheorem{remark}{Remark}
\newtheorem{corol}{Corollary}
 \title{{\bf   Abstract Fractional Calculus for  m-accretive Operators  }}
\author{Maksim \,V.~Kukushkin   \\ \\
 \small  \textit{Moscow State University of Civil Engineering, 129337,  Moscow, Russia}\\
 \small\textit{Kabardino-Balkarian Scientific Center, RAS, 360051,  Nalchik, Russia}\\
\textit{\small\textit{kukushkinmv@rambler.ru}} }
\date{}
\begin{document}

\maketitle

\begin{abstract}
In this paper we aim  to construct an abstract  model of a   differential operator with a fractional  integro-differential operator  composition  in  final terms, where  modeling is understood as an  interpretation of   concrete differential  operators in terms of the infinitesimal generator of a corresponding semigroup. We study such operators   as  a Kipriyanov operator, Riesz potential,  difference operator.
Along with this, we consider   transforms  of   m-accretive operators as a generalization,    introduce an  operator class $\mathfrak{G_{\alpha}}$ and provide a description of its  spectral properties.

\end{abstract}
\begin{small}\textbf{Keywords:} Fractional power of an m-accretive  operator; infinitesimal  generator of a semigroup;
 strictly accretive operator; asymptotic formula for the eigenvalues;  Schatten-von Neumann  class.   \\\\
{\textbf{MSC} 47B28; 47A10; 47B12; 47B10; 47B25; 20M05; 26A33.}
\end{small}

\section{Introduction}

To write this paper, we were firstly motivated by  the  boundary value problems of the  Sturm-Liouville type   for fractional
differential equations. Many authors devoted their attention to the topic, nevertheless   this kind of problems are relevant for today. First of all, it is connected  with the fact that they model various physical -
chemical processes: filtration of liquid and gas in highly porous fractal   medium; heat exchange processes in medium  with fractal structure and memory; casual walks of a point particle that starts moving from the origin
by self-similar fractal set; oscillator motion under the action of
elastic forces which is  characteristic for  viscoelastic media, etc.
  In particular,  we would like   to  study  the  eigenvalue problem for a   differential operator  with a fractional derivative in  final terms, in this connection such operators as a Kipriyanov fractional differential operator, Riesz potential,  difference operator are involved.

   In the case corresponding to a selfadjoint senior term we can partially solve the problem  having applied the results of the   perturbation theory,   within the framework of which  the following papers are well-known   \cite{firstab_lit:1Katsnelson}, \cite{firstab_lit:1Krein},   \cite{firstab_lit:2Markus},
  \cite{firstab_lit:3Matsaev},\cite{firstab_lit:Markus Matsaev},
 \cite{firstab_lit:Shkalikov A.}. Generally, to apply the last paper results for a concrete operator $L$ we must be able to represent  it  by  a sum   $L=T+A,$ where the senior term
    $T$ must be either a selfadjoint or normal operator. In other cases we can use methods of the paper  \cite{kukushkin2019},  which are relevant if we deal with non-selfadjoint operators and allow us  to study spectral properties of   operators  whether we have the mentioned above  representation or not.  We should add that the results of  the paper \cite{firstab_lit:Markus Matsaev}  can be  also applied to study non-selfadjoin operators (see a detailed remark in \cite{firstab_lit:Shkalikov A.}).

In many papers
\cite{firstab_lit:1Aleroev1984}-\cite{firstab_lit:1Aleroev1994},   \cite{firstab_lit:1Nakhushev1977}   the eigenvalue problem  was studied by methods of a theory of functions and it is remarkable that  special properties of the fractional derivative were used in these papers, bellow  we present a brief  review.   The singular number   problem for the resolvent of  a second order differential operator
  with  the  Riemann-Liouville  fractional derivative in    final  terms  was considered in the paper \cite{firstab_lit:1Aleroev1984}. It was proved
that the resolvent   belongs to the Hilbert-Schmidt class.  The problem of   completeness
of  the root functions  system was studied  in the paper \cite{firstab_lit:Aleroev1989}, also  similar problems  were considered
in the paper  \cite{firstab_lit:1Aleroev1994}.

However,  we  deal with a more general operator --- a  differential operator    with a fractional  integro-differential operator   composition  in   final terms, which covers the  operator mentioned above. Note that   several types of   compositions of fractional integro-differential operators were studied by such mathematicians as
 Prabhakar T.R. \cite{firstab_lit:1Prabhakar}, Love E.R. \cite{firstab_lit:5Love}, Erdelyi A. \cite{firstab_lit:15Erdelyi}, McBride A. \cite{firstab_lit:9McBride},
  Dimovski I.H., Kiryakova V.S. \cite{firstab_lit:2Dim-Kir}, Nakhushev A.M. \cite{firstab_lit:nakh2003}.

 The central idea of this paper  is to built a   model that gives us a representation  of a  composition of  fractional differential operators   in terms of the semigroup theory.    For instance we can    represent a second order differential operator as some kind of  a  transform of   the infinitesimal generator of a shift semigroup. Continuing this line of reasonings we generalize a   differential operator with a fractional integro-differential composition  in final terms   to some transform of the corresponding  infinitesimal generator   and introduce  a class of  transforms of   m-accretive operators. Further,   we  use   methods obtained in the papers
\cite{firstab_lit(arXiv non-self)kukushkin2018},\cite{kukushkin2019} to  study spectral properties  of non-selfadjoint operators acting  in  a complex  separable Hilbert space, these methods alow us to   obtain an  asymptotic equivalence between   the
real component of the resolvent and the resolvent of the   real component of an operator. Due  to such an approach we  obtain relevant  results since  an asymptotic formula  for   the operator  real component  can be  established in many cases
(see \cite{firstab_lit:2Agranovich2011}, \cite{firstab_lit:Rosenblum}). Thus,   a classification  in accordance with  resolvent  belonging     to  the  Schatten-von Neumann  class is obtained,   a sufficient condition of completeness of the root vectors system is   formulated. As the most significant result  we obtain  an   asymptotic formula for the   eigenvalues.

\section{Preliminaries}

Let    $ C,C_{i} ,\;i\in \mathbb{N}_{0}$ be   real constants. We   assume   that  a  value of $C$ is positive and   can be different in   various formulas  but   values of $C_{i} $ are  certain.   Everywhere further, if the contrary is not stated, we consider   linear    densely defined operators acting on a separable complex  Hilbert space $\mathfrak{H}$. Denote by $ \mathcal{B} (\mathfrak{H})$    the set of linear bounded operators   on    $\mathfrak{H}.$  Denote by
    $\tilde{L}$   the  closure of an  operator $L.$ We establish the following agreement on using  symbols $\tilde{L}^{i}:= (\tilde{L})^{i},$ where $i$ is an arbitrary symbol.  Denote by    $    \mathrm{D}   (L),\,   \mathrm{R}   (L),\,\mathrm{N}(L)$      the  {\it domain of definition}, the {\it range},  and the {\it kernel} or {\it null space}  of an  operator $L$ respectively. The deficiency (codimension) of $\mathrm{R}(L),$ dimension of $\mathrm{N}(L)$ are denoted by $\mathrm{def}\, T,\;\mathrm{nul}\,T$ respectively. Assume that $L$ is a closed   operator acting on $\mathfrak{H},\,\mathrm{N}(L)=0,$  let us define a Hilbert space
$
 \mathfrak{H}_{L}:= \big \{f,g\in \mathrm{D}(L),\,(f,g)_{ \mathfrak{H}_{L}}=(Lf,Lg)_{\mathfrak{H} } \big\}.
$
Consider a pair of complex Hilbert spaces $\mathfrak{H},\mathfrak{H}_{+},$ the notation
$
\mathfrak{H}_{+}\subset\subset\mathfrak{ H}
$
   means that $\mathfrak{H}_{+}$ is dense in $\mathfrak{H}$ as a set of    elements and we have a bounded embedding provided by the inequality
$$
\|f\|_{\mathfrak{H}}\leq C_{0}\|f\|_{\mathfrak{H}_{+}},\,C_{0}>0,\;f\in \mathfrak{H}_{+},
$$
moreover   any  bounded  set with respect to the norm $\mathfrak{H}_{+}$ is compact with respect to the norm $\mathfrak{H}.$
  Let $L$ be a closed operator, for any closable operator $S$ such that
$\tilde{S} = L,$ its domain $\mathrm{D} (S)$ will be called a core of $L.$ Denote by $\mathrm{D}_{0}(L)$ a core of a closeable operator $L.$ Let    $\mathrm{P}(L)$ be  the resolvent set of an operator $L$ and
     $ R_{L}(\zeta),\,\zeta\in \mathrm{P}(L),\,[R_{L} :=R_{L}(0)]$ denotes      the resolvent of an  operator $L.$ Denote by $\lambda_{i}(L),\,i\in \mathbb{N} $ the eigenvalues of an operator $L.$
 Suppose $L$ is  a compact operator and  $N:=(L^{\ast}L)^{1/2},\,r(N):={\rm dim}\,  \mathrm{R}  (N);$ then   the eigenvalues of the operator $N$ are called   the {\it singular  numbers} ({\it s-numbers}) of the operator $L$ and are denoted by $s_{i}(L),\,i=1,\,2,...\,,r(N).$ If $r(N)<\infty,$ then we put by definition     $s_{i}=0,\,i=r(N)+1,2,...\,.$
 According  to the terminology of the monograph   \cite{firstab_lit:1Gohberg1965}  the  dimension  of the  root vectors subspace  corresponding  to a certain  eigenvalue $\lambda_{k}$  is called  the {\it algebraic multiplicity} of the eigenvalue $\lambda_{k}.$
Let  $\nu(L)$ denotes   the sum of all algebraic multiplicities of an  operator $L.$ Let  $\mathfrak{S}_{p}(\mathfrak{H}),\, 0< p<\infty $ be       a Schatten-von Neumann    class and      $\mathfrak{S}_{\infty}(\mathfrak{H})$ be the set of compact operators. By definition, put
$$
\mathfrak{S}_{p}(\mathfrak{H}):=\left\{ L: \mathfrak{H}\rightarrow \mathfrak{H},  \sum\limits_{i=1}^{\infty}s^{p}_{i}(L)<\infty,\;0< p<\infty \right\}.
$$
    Suppose  $L$ is  an   operator with a compact resolvent and
$s_{n}(R_{L})\leq   C \,n^{-\mu},\,n\in \mathbb{N},\,0\leq\mu< \infty;$ then
 we
 denote by  $\mu(L) $   order of the     operator $L$ in accordance with  the definition given in the paper  \cite{firstab_lit:Shkalikov A.}.
 Denote by  $ \mathfrak{Re} L  := \left(L+L^{*}\right)/2,\, \mathfrak{Im} L  := \left(L-L^{*}\right)/2 i$
  the  real  and   imaginary components    of an  operator $L$  respectively.
In accordance with  the terminology of the monograph  \cite{firstab_lit:kato1980} the set $\Theta(L):=\{z\in \mathbb{C}: z=(Lf,f)_{\mathfrak{H}},\,f\in  \mathrm{D} (L),\,\|f\|_{\mathfrak{H}}=1\}$ is called the  {\it numerical range}  of an   operator $L.$
  An  operator $L$ is called    {\it sectorial}    if its  numerical range   belongs to a  closed
sector     $\mathfrak{ L}_{\gamma}(\theta):=\{\zeta:\,|\arg(\zeta-\gamma)|\leq\theta<\pi/2\} ,$ where      $\gamma$ is the vertex   and  $ \theta$ is the semi-angle of the sector   $\mathfrak{ L}_{\gamma}(\theta).$
 An operator $L$ is called  {\it bounded from below}   if the following relation  holds  $\mathrm{Re}(Lf,f)_{\mathfrak{H}}\geq \gamma_{L}\|f\|^{2}_{\mathfrak{H}},\,f\in  \mathrm{D} (L),\,\gamma_{L}\in \mathbb{R},$  where $\gamma_{L}$ is called a lower bound of $L.$ An operator $L$ is called  {\it   accretive}   if  $\gamma_{L}=0.$
 An operator $L$ is called  {\it strictly  accretive}   if  $\gamma_{L}>0.$      An  operator $L$ is called    {\it m-accretive}     if the next relation  holds $(A+\zeta)^{-1}\in \mathcal{B}(\mathfrak{H}),\,\|(A+\zeta)^{-1}\| \leq   (\mathrm{Re}\zeta)^{-1},\,\mathrm{Re}\zeta>0. $
An operator $L$ is called    {\it m-sectorial}   if $L$ is   sectorial    and $L+ \beta$ is m-accretive   for some constant $\beta.$   An operator $L$ is called     {\it symmetric}     if one is densely defined and the following  equality  holds $(Lf,g)_{\mathfrak{H}}=(f,Lg)_{\mathfrak{H}},\,f,g\in   \mathrm{D}  (L).$

    Consider a   sesquilinear form   $ t  [\cdot,\cdot]$ (see \cite{firstab_lit:kato1980} )
defined on a linear manifold  of the Hilbert space $\mathfrak{H}.$   Denote by $   t  [\cdot ]$ the  quadratic form corresponding to the sesquilinear form $ t  [\cdot,\cdot].$
Let   $  \mathfrak{h}=( t + t ^{\ast})/2,\, \mathfrak{k}   =( t - t ^{\ast})/2i$
   be a   real  and    imaginary component     of the   form $  t $ respectively, where $ t^{\ast}[u,v]=t \overline{[v,u]},\;\mathrm{D}(t ^{\ast})=\mathrm{D}(t).$ According to these definitions, we have $
 \mathfrak{h}[\cdot]=\mathrm{Re}\,t[\cdot],\,  \mathfrak{k}[\cdot]=\mathrm{Im}\,t[\cdot].$ Denote by $\tilde{t}$ the  closure   of a   form $t.$  The range of a quadratic form
  $ t [f],\,f\in \mathrm{D}(t),\,\|f\|_{\mathfrak{H}}=1$ is called    {\it range} of the sesquilinear form  $t $ and is denoted by $\Theta(t).$
 A  form $t$ is called    {\it sectorial}    if  its    range  belongs to   a sector  having  a vertex $\gamma$  situated at the real axis and a semi-angle $0\leq\theta<\pi/2.$   Suppose   $t$ is a closed sectorial form; then  a linear  manifold  $\mathrm{D}_{0}(t) \subset\mathrm{D} (t)$   is
called    {\it core}  of $t,$ if the restriction   of $t$ to   $\mathrm{D}_{0}(t)$ has the   closure
$t$ (see\cite[p.166]{firstab_lit:kato1980}).   Due to Theorem 2.7 \cite[p.323]{firstab_lit:kato1980}  there exist unique    m-sectorial operators  $T_{t},T_{ \mathfrak{h}} $  associated  with   the  closed sectorial   forms $t,  \mathfrak{h}$   respectively.   The operator  $T_{  \mathfrak{h}} $ is called  a {\it real part} of the operator $T_{t}$ and is denoted by  $Re\, T_{t}.$ Suppose  $L$ is a sectorial densely defined operator and $t[u,v]:=(Lu,v)_{\mathfrak{H}},\,\mathrm{D}(t)=\mathrm{D}(L);$  then
 due to   Theorem 1.27 \cite[p.318]{firstab_lit:kato1980}   the corresponding  form $t$ is   closable, due to
   Theorem 2.7 \cite[p.323]{firstab_lit:kato1980} there exists   a unique m-sectorial operator   $T_{\tilde{t}}$   associated  with  the form $\tilde{t}.$  In accordance with the  definition \cite[p.325]{firstab_lit:kato1980} the    operator $T_{\tilde{t}}$ is called     a {\it Friedrichs extension} of the operator $L.$

Assume that  $T_{t},\,(0\leq t<\infty)$ is a semigroup of bounded linear operators on    $\mathfrak{H},$    by definition put
$$
Af=-\lim\limits_{t\rightarrow+0} \left(\frac{T_{t}-I}{t}\right)f,
$$
where $\mathrm{D}(A)$  is a set of elements for which  the last limit  exists in the sense of the norm  $\mathfrak{H}.$    In accordance with definition \cite[p.1]{Pasy} the operator $-A$ is called the  {\it infinitesimal  generator} of the semigroup $T_{t}.$

  Let $f_{t} :I\rightarrow \mathfrak{H},\,t\in I:=[a,b],\,-\infty< a <b<\infty.$ The following integral is understood in the Riemann  sense as a limit of partial sums
\begin{equation}\label{11.001}
\sum\limits_{i=0}^{n}f_{\xi_{i}}\Delta t_{i}  \stackrel{\mathfrak{H}}{ \longrightarrow}  \int\limits_{I}f_{t}dt,\,\lambda\rightarrow 0,
\end{equation}
where $(a=t_{0}<t_{1}<...<t_{n}=b)$ is an arbitrary splitting of the segment $I,\;\lambda:=\max\limits_{i}(t_{i+1}-t_{i}),\;\xi_{i}$ is an arbitrary point belonging to $[t_{i},t_{i+1}].$
The sufficient condition of the last integral existence is a continuous property (see\cite[p.248]{firstab_lit:Krasnoselskii M.A.}) i.e.
$
f_{t}\stackrel{\mathfrak{H}}{ \longrightarrow}f_{t_{0}},\,t\rightarrow t_{0},\;\forall t_{0}\in I.
$
The improper integral is understood as a limit
\begin{equation}\label{11.031}
 \int\limits_{a}^{b}f_{t}dt\stackrel{\mathfrak{H}}{ \longrightarrow} \int\limits_{a}^{c}f_{t}dt,\,b\rightarrow c,\,c\in [-\infty,\infty].
\end{equation}

Using     notations of the paper     \cite{firstab_lit:kipriyanov1960} we assume that $\Omega$ is a  convex domain of the  $n$ -  dimensional Euclidean space $\mathbb{E}^{n}$, $P$ is a fixed point of the boundary $\partial\Omega,$
$Q(r,\mathbf{e})$ is an arbitrary point of $\Omega;$ we denote by $\mathbf{e}$   a unit vector having a direction from  $P$ to $Q,$ denote by $r=|P-Q|$   the Euclidean distance between the points $P,Q,$ and   use the shorthand notation    $T:=P+\mathbf{e}t,\,t\in \mathbb{R}.$
We   consider the Lebesgue  classes   $L_{p}(\Omega),\;1\leq p<\infty $ of  complex valued functions.  For the function $f\in L_{p}(\Omega),$    we have
\begin{equation}\label{1a}
\int\limits_{\Omega}|f(Q)|^{p}dQ=\int\limits_{\omega}d\chi\int\limits_{0}^{d(\mathbf{e})}|f(Q)|^{p}r^{n-1}dr<\infty,
\end{equation}
where $d\chi$   is an element of   solid angle of
the unit sphere  surface (the unit sphere belongs to $\mathbb{E}^{n}$)  and $\omega$ is a  surface of this sphere,   $d:=d(\mathbf{e})$  is the  length of the  segment of the  ray going from the point $P$ in the direction
$\mathbf{e}$ within the domain $\Omega.$
Without lose of   generality, we consider only those directions of $\mathbf{e}$ for which the inner integral on the right-hand  side of equality \eqref{1a} exists and is finite. It is  the well-known fact that  these are almost all directions.
 We use a shorthand  notation  $P\cdot Q=P^{i}Q_{i}=\sum^{n}_{i=1}P_{i}Q_{i}$ for the inner product of the points $P=(P_{1},P_{2},...,P_{n}),\,Q=(Q_{1},Q_{2},...,Q_{n})$ which     belong to  $\mathbb{E}^{n}.$
     Denote by  $D_{i}f$  a weak partial derivative of the function $f$ with respect to a coordinate variable with index   $1\leq i\leq n,$ in the
     one-dimensional case  we    use a unified form of notations i.e.     $D_{1}f= d f/dx =f'.$
We  assume that all functions have  a zero extension outside  of $\bar{\Omega}.$
Everywhere further,   unless  otherwise  stated,  we   use  notations of the papers   \cite{firstab_lit:1Gohberg1965},  \cite{firstab_lit:kato1980},  \cite{firstab_lit:kipriyanov1960}, \cite{firstab_lit:1kipriyanov1960},
\cite{firstab_lit:samko1987}.

\vspace{0.5cm}
\noindent{\bf 1. Auxiliary propositions }\\

In this paragraph  we present propositions devoted to properties of  accretive operators and related questions.
For a reader convenience, we would like to establish well-known facts  of the operator theory under  a  point of view  that  is necessary    for the following reasonings.
\begin{lem}\label{L1}
Assume that   $A$ is  a  closed densely defined  operator, the following    condition  holds
\begin{equation}\label{5}
\|(A+t)^{-1}\|_{\mathrm{R} \rightarrow \mathfrak{H}}\leq\frac{1}{ t},\,t>0,
\end{equation}
where a notation  $\mathrm{R}:=\mathrm{R}(A+t)$ is used. Then the operators  $A,A^{\ast}$ are m-accretive.
\end{lem}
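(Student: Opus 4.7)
The goal is to verify the paper's two defining conditions of m-accretivity --- $(L+\zeta)^{-1}\in\mathcal{B}(\mathfrak{H})$ with $\|(L+\zeta)^{-1}\|\leq 1/\mathrm{Re}\,\zeta$ on the open right half-plane --- for both $L=A$ and $L=A^{\ast}$, starting from the positive-real-axis information in \eqref{5}. My plan splits into four pieces: extract accretivity from \eqref{5}, promote it to surjectivity of $A+t$, extend to complex $\zeta$, and finally transfer everything to the adjoint.

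First, squaring $\|(A+t)f\|\geq t\|f\|$ gives $\|Af\|^{2}+2t\,\mathrm{Re}(Af,f)\geq 0$; dividing by $2t$ and letting $t\to\infty$ yields $\mathrm{Re}(Af,f)\geq 0$ for every $f\in\mathrm{D}(A)$, so $A$ is accretive, and Cauchy--Schwarz upgrades this to $\|(A+\zeta)f\|\geq\mathrm{Re}\,\zeta\cdot\|f\|$ for all $\zeta$ with positive real part. Combined with closedness of $A$, the same estimate shows $\mathrm{R}(A+t)$ is already closed: if $(A+t)f_{n}\to h$ then $\{f_{n}\}$ is Cauchy by \eqref{5}, the limit $f$ lies in $\mathrm{D}(A)$ by closedness, and $(A+t)f=h$.

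The technical heart, and the step I expect to be the main obstacle, is the surjectivity $\mathrm{R}(A+t)=\mathfrak{H}$. If $g\neq 0$ were orthogonal to the closed subspace $\mathrm{R}(A+t)$, density of $\mathrm{D}(A)$ and the definition of the adjoint would place $g\in\mathrm{D}(A^{\ast})$ with $A^{\ast}g=-tg$, so I must exclude this negative real eigenvalue. The natural route is to establish that $A^{\ast}$ is itself accretive: approximate $g\in\mathrm{D}(A^{\ast})$ by Yosida-type vectors $J_{s}g=s(A+s)^{-1}g\in\mathrm{D}(A)$ (using \eqref{5} to control these contractions on $\mathrm{R}(A+s)$), then transfer the inequality $\mathrm{Re}(Af,f)\geq 0$ to $\mathrm{Re}(A^{\ast}g,g)\geq 0$ by passing to the limit $s\to\infty$; applied to an eigenvector with $A^{\ast}g=-tg$ this forces $-t\|g\|^{2}\geq 0$, hence $g=0$.

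Once $\mathrm{R}(A+t)=\mathfrak{H}$ for every $t>0$, a Neumann-series perturbation $(A+\zeta)=(A+t)\bigl[I+(\zeta-t)(A+t)^{-1}\bigr]$ converges on the disk $|\zeta-t|<t$; letting $t\to\infty$ these disks exhaust the open right half-plane, and invertibility combined with the accretive estimate $\|(A+\zeta)f\|\geq\mathrm{Re}\,\zeta\cdot\|f\|$ yields $\|(A+\zeta)^{-1}\|\leq 1/\mathrm{Re}\,\zeta$, so $A$ is m-accretive. For the adjoint, the Hille--Yosida theorem identifies m-accretivity of $A$ with $-A$ generating a $C_{0}$ contraction semigroup on $\mathfrak{H}$; on a Hilbert space the adjoint family is again a $C_{0}$ contraction semigroup whose generator is $-A^{\ast}$, whence $A^{\ast}$ is m-accretive as well.
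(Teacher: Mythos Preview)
Your surjectivity step is circular. To obtain $\mathrm{R}(A+t)=\mathfrak{H}$ you propose to show first that $A^{\ast}$ is accretive via the Yosida regularisation $J_{s}g=s(A+s)^{-1}g$; but under hypothesis \eqref{5} the operator $(A+s)^{-1}$ is defined only on $\mathrm{R}(A+s)$, and you have no reason to expect an arbitrary $g\in\mathrm{D}(A^{\ast})$---in particular the eigenvector $g\in\mathrm{R}(A+t)^{\perp}$ you are trying to rule out---to lie in that range. Surjectivity of $A+s$ is exactly what is at issue, so the argument presupposes its own conclusion. The remaining pieces (extracting accretivity from \eqref{5}, closedness of the range, the Neumann-series extension to the half-plane, and the adjoint via the dual semigroup) are all fine, but every one of them rests on this unproved point.

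The paper's route is different: it quotes Kato's Theorem~3.2 on the constancy of $\mathrm{def}(A-\lambda)$ over connected components of $\mathbb{C}\setminus\overline{\Theta(A)}$, and then argues that $\mathrm{D}(A)\cap\mathrm{R}(A-\lambda_{0})^{\perp}=\{0\}$ together with density of $\mathrm{D}(A)$ forces $\mathrm{R}(A-\lambda_{0})^{\perp}=\{0\}$. That implication is not valid either, and in fact the stated hypotheses do \emph{not} force surjectivity: take $A=d/dx$ with $\mathrm{D}(A)=H^{1}_{0}(0,1)\subset L^{2}(0,1)$, which is closed, densely defined, and skew-symmetric (so $\mathrm{Re}(Af,f)=0$ and \eqref{5} holds), yet $\mathrm{R}(A+t)=\{g:\int_{0}^{1}e^{ts}g(s)\,ds=0\}$ is a proper closed hyperplane for every $t>0$. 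Hence no rearrangement of your Yosida idea can close the gap without an additional hypothesis---for instance the Lumer--Phillips range condition that $\mathrm{R}(A+t_{0})$ be dense for some $t_{0}>0$.
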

\begin{proof}
Using \eqref{5} consider
$$
\|f\|^{2}_{\mathfrak{H}}\leq  \frac{1}{t^{2}}  \|(A+t)f \|^{2}_{ \mathfrak{H}};\,
 \|f\|^{2}_{\mathfrak{H}}\leq  \frac{1}{ t^{2}} \left\{ \| A f\|^{2}_{ \mathfrak{H}}+2t \mathrm{Re}(Af,f)_{ \mathfrak{H}}+t^{2}\|   f \|^{2}_{ \mathfrak{H}}\right\} ;
$$
 $$
t^{-1} \| A  f \|^{2}_{ \mathfrak{H}}+2 \mathrm{Re}(A f,f)_{ \mathfrak{H}}\geq0,\,f\in  \mathrm{D} (A).
$$
Let $t$ be tended to infinity, then we obtain
\begin{equation}\label{6}\mathrm{Re}(A f,f)_{ \mathfrak{H}}\geq0 ,\,f\in \mathrm{D}(A).
\end{equation}
It means  that   the  operator $A$ has an accretive property.
Due to \eqref{6}, we have   $\{\lambda \in \mathbb{C}:\,\mathrm{Re}\lambda<0\}\subset \Delta(A),  $ where $\Delta(A)=\mathbb{C}\setminus   \overline{\Theta  (A)}.$  Applying Theorem 3.2 \cite[p.268]{firstab_lit:kato1980}, we obtain that $A-\lambda$ has a closed range and  $\mathrm{nul} (A-\lambda)=0,\,\mathrm{def} (A-\lambda)=\mathrm{const},\,\forall\lambda\in \Delta(A).$
Let $\lambda_{0}\in \Delta(A) ,\;{\rm Re}\lambda_{0} <0.$
Note that in consequence of inequality  \eqref{6}, we have
 \begin{equation}\label{7.1}
  {\rm Re} ( f,(A-\lambda )f  )_{\mathfrak{H}}\geq   - {\rm Re} \lambda   \|f\|^{2}_{\mathfrak{H}},\,f\in \mathrm{D}(A).
 \end{equation}
  Since the  operator $A-\lambda_{0}$ has a closed range, then
\begin{equation*}
 \mathfrak{H}=\mathrm{R} (A-\lambda_{0})\oplus \mathrm{R} (A-\lambda_{0})^{\perp} .
 \end{equation*}
We remark  that the intersection of the sets  $\mathrm{D}(A)$ and $\mathrm{R} (A-\lambda_{0})^{\perp}$ is  zero, because if we assume  the contrary,   then applying inequality  \eqref{7.1},  for arbitrary element
 $f\in \mathrm{D}(A)\cap \mathrm{R}  (A-\lambda_{0})^{\perp}$    we get
 \begin{equation*}
 - {\rm Re} \lambda_{0}  \|f\|^{2}_{\mathfrak{H}} \leq  {\rm Re} ( f,[A-\lambda_{0} ]f  )_{\mathfrak{H}}=0,
 \end{equation*}
hence $f=0.$   It implies that
$$
\left(f,g\right)_{\mathfrak{H}}=0,\;\forall f\in  \mathrm{R}  (A-\lambda_{0})^{\perp},\;\forall g\in \mathrm{D}(A).
$$
Since $  \mathrm{D}(A)$  is a dense set in $\mathfrak{H},$ then $\mathrm{R}  (A-\lambda_{0})^{\perp}=0.$ It implies that  ${\rm def} (A-\lambda_{0}) =0$ and if we take into account  Theorem 3.2 \cite[p.268]{firstab_lit:kato1980}, then
we came to the conclusion that ${\rm def} (A-\lambda )=0,\;\forall\lambda\in \Delta(A),$ the operator $A$ is m-accretive.

Now assume that the operator $A$ is m-accretive.
 Since it is proved that $\mathrm{def}(A+\lambda)=0,\,\lambda> 0,$ then $ \mathrm{nul}(A+\lambda)^{\ast}=0,\,\lambda> 0$ (see (3.1) \cite[p.267]{firstab_lit:kato1980}).   In accordance with the  well-known fact, we have $\left([\lambda  +A]^{-1}\right)^{\ast}=[\left(\lambda  +A\right)^{\ast}]^{-1}.$ Using the obvious relation $\lambda+A^{\ast}=\left(\lambda  +A\right)^{\ast},$   we can deduce  $(\lambda+A^{\ast})^{-1}=[\left(\lambda  +A\right)^{\ast}]^{-1}.$ Also it is obvious that $\left\|( \lambda  +A)^{-1}\right\| =\left\| [( \lambda  +A)^{-1}]^{\ast}\right\|,$ since both operators are bounded. Hence
$$
\| \left(\lambda  +A^{\ast}\right) ^{-1}f  \|_{\mathfrak{H}}=\|[\left(\lambda  +A\right)^{\ast}]^{-1}f  \|_{\mathfrak{H}}=\|\left([\lambda  +A]^{-1}\right)^{\ast}f\|_{\mathfrak{H}} \leq\frac{1}{ \lambda}\|f\|_{\mathfrak{H}},
 f\in \mathrm{R}(\lambda  +A^{\ast}),\;\lambda>0.
$$
This relation can be rewritten in the following form
\begin{equation*}
\|\left(\lambda  +A^{\ast}\right)^{-1}\|_{\mathrm{R}\rightarrow \mathfrak{H}}  \leq\frac{1}{ \lambda},\,\lambda>0.
\end{equation*}
Using the proved above fact, we   conclude that
\begin{equation}\label{10}
\|\left(\lambda  +A^{\ast}\right)^{-1}\| \leq\frac{1}{\mathrm{Re}\lambda},\,\mathrm{Re}\lambda>0.
\end{equation}
The proof is complete.
\end{proof}

In accordance with the definition given  in \cite{firstab_lit:Krasnoselskii M.A.} we can define  a positive and negative  fractional powers of a positive  operator $A$ as follows
\begin{equation}\label{9}
A^{\alpha}:=\frac{\sin\alpha \pi}{\pi}\int\limits_{0}^{\infty}\lambda^{\alpha-1}(\lambda  +A)^{-1} A \,d \lambda;\,\,A^{-\alpha}:=\frac{\sin\alpha \pi}{\pi}\int\limits_{0}^{\infty}\lambda^{-\alpha}(\lambda  +A)^{-1}  \,d \lambda,\,\alpha\in (0,1).
\end{equation}
This definition can be correctly extended  on m-accretive operators, the corresponding reasonings can be found in \cite{firstab_lit:kato1980}. Thus, further we define positive and negative fractional powers of m-accretive operators by formula \eqref{9}.

 \begin{lem}\label{L3.1}
 Assume that $\alpha\in(0,1),$ the operator   $J$ is m-accretive,    $  J^{-1} $ is bounded, then
\begin{equation}\label{15.09}
\|J^{-\alpha}f\|_{\mathfrak{H}}\leq C_{   1-\alpha}\|f\|_{\mathfrak{H}},\,f\in \mathfrak{H},
\end{equation}
where
$
C_{ 1-\alpha }= 2 (1-\alpha)^{-1} \|J^{-1}\|  + \alpha^{-1} .$
\end{lem}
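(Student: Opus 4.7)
The plan is to estimate the integral representation \eqref{9} of $J^{-\alpha}$ directly, splitting the integration at $\lambda=1$ and using two complementary bounds on the resolvent $(\lambda+J)^{-1}$.

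First I would derive a uniform bound on the resolvent for small $\lambda$. Since $J$ is m-accretive, the bound $\|(\lambda+J)^{-1}\|\le 1/\lambda$ holds for $\lambda>0$, but this is not integrable against $\lambda^{-\alpha}$ near zero. To handle this, I exploit the assumption that $J^{-1}$ is bounded. Starting from the algebraic identity $J^{-1}(\lambda+J) = \lambda J^{-1}+I$, one gets
\begin{equation*}
(\lambda+J)^{-1} = J^{-1} - \lambda (\lambda+J)^{-1}J^{-1},
\end{equation*}
so that $\|(\lambda+J)^{-1}\| \le \|J^{-1}\| + \lambda\cdot(1/\lambda)\cdot\|J^{-1}\| = 2\|J^{-1}\|$ for every $\lambda>0$.

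Second, I would plug these bounds into \eqref{9}. Split the integral as $\int_0^1+\int_1^{\infty}$: on $(0,1)$ use the uniform bound $\|(\lambda+J)^{-1}\|\le 2\|J^{-1}\|$, giving
\begin{equation*}
\int_{0}^{1}\lambda^{-\alpha}\,\|(\lambda+J)^{-1}f\|_{\mathfrak{H}}\,d\lambda \le 2\|J^{-1}\|\,\|f\|_{\mathfrak{H}}\int_{0}^{1}\lambda^{-\alpha}d\lambda = \frac{2\|J^{-1}\|}{1-\alpha}\,\|f\|_{\mathfrak{H}};
\end{equation*}
on $(1,\infty)$ use the m-accretive bound $\|(\lambda+J)^{-1}\|\le 1/\lambda$, yielding $\int_1^{\infty}\lambda^{-\alpha-1}d\lambda = 1/\alpha$. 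Combining and using $\sin(\alpha\pi)/\pi\le 1$ for $\alpha\in(0,1)$ produces the advertised constant $C_{1-\alpha}=2(1-\alpha)^{-1}\|J^{-1}\|+\alpha^{-1}$.

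A minor technical point I would mention is that the improper integral \eqref{11.031} defining $J^{-\alpha}f$ converges in $\mathfrak{H}$ precisely because of the two bounds above (integrability of $\lambda^{-\alpha}$ at $0$ and of $\lambda^{-\alpha-1}$ at $\infty$), so the estimate of its norm is legitimately obtained by bringing the norm inside the integral. The main obstacle is really the derivation of the uniform resolvent bound $2\|J^{-1}\|$ near $\lambda=0$; this is the only place where the boundedness hypothesis on $J^{-1}$ is used, and without it the naive m-accretive estimate would diverge at the lower endpoint.
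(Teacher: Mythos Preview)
Your proof is correct and follows essentially the same route as the paper's: split the integral at $\lambda=1$, use the m-accretive bound $\|(\lambda+J)^{-1}\|\le 1/\lambda$ on $(1,\infty)$, and on $(0,1)$ exploit boundedness of $J^{-1}$ via the same resolvent identity $(\lambda+J)^{-1}=J^{-1}-\lambda(\lambda+J)^{-1}J^{-1}$. The only cosmetic difference is that you first isolate the pointwise bound $\|(\lambda+J)^{-1}\|\le 2\|J^{-1}\|$ and then integrate, whereas the paper performs the same manipulation inside the integral for $f\in\mathrm{D}(J)$ and extends to $\mathfrak{H}$ by density.
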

\begin{proof}
 Consider
$$
J^{-\alpha}=\int\limits_{0}^{1}\lambda^{-\alpha }(\lambda+J)^{-1}d\lambda+\int\limits_{1}^{\infty}\lambda^{-\alpha}(\lambda+J)^{-1}d\lambda=I_{1}+I_{2}.
$$
Using definition of the integral \eqref{11.001},\eqref{11.031}  in a Hilbert  space  and the fact $J(\lambda+J)^{-1}f=(\lambda+J)^{-1}Jf,\,f\in \mathrm{D}(J),$ we can easily obtain
$$
\|I_{1}f\|_{\mathfrak{H}}=\left\|\int\limits_{0}^{1}\lambda^{-\alpha}J^{-1}J(\lambda+J)^{-1}fd\lambda\right\|_{\mathfrak{H}}\leq
\|J^{-1}\|  \cdot\left\|\int\limits_{0}^{1}\lambda^{-\alpha}J(\lambda+J)^{-1}fd\lambda\right\|_{\mathfrak{H}}\leq
$$
$$
\leq \|J^{-1}\|_{\mathrm{R} \rightarrow \mathfrak{H}} \cdot \left\{\left\| f \right\|_{\mathfrak{H}}  \int\limits_{0}^{1}\lambda^{-\alpha} fd\lambda  +  \left\|\int\limits_{0}^{1}\lambda^{1-\alpha} (\lambda+J)^{-1}fd\lambda\right\|_{\mathfrak{H}}   \right\}       \leq
 $$
 $$
 \leq2 \|J^{-1}\|\cdot\left\|  f \right\|_{\mathfrak{H}} \int\limits_{0}^{1}\lambda^{-\alpha} d\lambda
,\,f\in \mathrm{D}(J);
$$
$$
\|I_{2}f\|_{\mathfrak{H}}=\left\|\int\limits_{1}^{\infty}\lambda^{-\alpha} (\lambda+J)^{-1}fd\lambda\right\|_{\mathfrak{H}}\leq
\left\|  f \right\|_{\mathfrak{H}}    \int\limits_{1}^{\infty}\lambda^{-\alpha} \|(\lambda+J)^{-1}\| d\lambda  \leq\left\|  f \right\|_{\mathfrak{H}} \int\limits_{1}^{\infty}\lambda^{-\alpha-1}   d\lambda .
$$
Hence
$ J^{-\alpha}
$  is bounded on $\mathrm{D}(J).$  Since $\mathrm{D}(J)$ is dense in $\mathfrak{H},$ then $J^{-\alpha}$ is bounded on $\mathfrak{H}.$ Calculating the right-hand sides of the above estimates, we obtain \eqref{15.09}.
\end{proof}

\section{Main results}
In this section we  explore  a special  operator class for which    a number of  spectral theory theorems can be applied. Further we construct an abstract  model of a  differential operator in terms of m-accretive operators and call it an m-accretive operator transform, we  find  such conditions that    being  imposed guaranty  that the transform   belongs to the class.  As an application of the obtained abstract results  we study a differential  operator   with a fractional  integro-differential operator   composition  in  final terms on a bounded domain of the $n$ -  dimensional Euclidean space as well as on real axis. One of the central points is a relation connecting fractional powers of m-accretive operators and fractional derivative in the most general sense. By virtue of such an approach we express fractional derivatives in terms of   infinitesimal generators, in this regard such operators as    a Kipriyanov operator, Riesz potential,  difference operator are considered.

\vspace{0.5cm}
\noindent{\bf 1. Spectral theorems}\\

 Bellow, we give  a slight generalization of the results presented in  \cite{firstab_lit(arXiv non-self)kukushkin2018}.
\begin{teo}\label{T1a}
Assume that $L$ is a non-sefadjoint operator acting in $\mathfrak{H},$  the following  conditions hold\\

 \noindent ($ \mathrm{H}1 $) There  exists a Hilbert space $\mathfrak{H}_{+}\subset\subset\mathfrak{ H}$ and a linear manifold $\mathfrak{M}$ that is  dense in  $\mathfrak{H}_{+}.$ The operator $L$ is defined on $\mathfrak{M}.$    \\

 \noindent  $( \mathrm{H2} )  \,\left|(Lf,g)_{\mathfrak{H}}\right|\! \leq \! C_{1}\|f\|_{\mathfrak{H}_{+}}\|g\|_{\mathfrak{H}_{+}},\,
      \, \mathrm{Re}(Lf,f)_{\mathfrak{H}}\!\geq\! C_{2}\|f\|^{2}_{\mathfrak{H}_{+}} ,\,f,g\in  \mathfrak{M},\; C_{1},C_{2}>0.
$
\\

\noindent Let $W$ be a restriction of the operator  $L$ on the set $\mathfrak{M}.$  Then   the following  propositions are true.\\

\noindent({\bf A}) {\it We have the following classification
\begin{equation*}
R_{ \tilde{W} }\in  \mathfrak{S}_{p},\,p= \left\{ \begin{aligned}
\!l,\,l>2/\mu,\,\mu\leq1,\\
   1,\,\mu>1    \\
\end{aligned}
 \right.\;,
\end{equation*}
where $\mu$ is    order of $H:=Re \, \tilde{W} .$
Moreover  under  the  assumptions $ \lambda_{n}(R_{H})\geq  C \,n^{-\mu},\,n\in \mathbb{N},$  we have
$$
 R_{  \tilde{W}}\in\mathfrak{S}_{p}\;  \Rightarrow \;\mu p>1,\;1\leq p<\infty.
$$
}\\

  \noindent({\bf B}) {\it The following relation  holds
 \begin{equation}\label{4}
\sum\limits_{i=1}^{n}|\lambda_{i}(R_{ \tilde{W}})|^{p}\leq
  C \sum\limits_{i=1}^{n } \,\lambda^{ p}_{i}(R_{H})  ,\,1\leq p<\infty,\, \;(n=1,2,...,\, \nu(R_{ \tilde{W} })) ,
\end{equation}
moreover   if  $\nu(R_{ \tilde{W} })=\infty$ and      $\mu \neq0,$  then  the following asymptotic formula  holds
$$
|\lambda_{i}(R_{\tilde{W}})|=  o\left(i^{-\mu+\varepsilon}    \right)\!,\,i\rightarrow \infty,\;\forall \varepsilon>0.
$$}

\noindent({\bf C})  {\it Assume that   $\theta< \pi \mu/2\, ,$    where $\theta$ is   the   semi-angle of the     sector $ \mathfrak{L}_{0}(\theta)\supset \Theta (\tilde{W}).$
Then  the system of   root   vectors  of   $R_{ \tilde{W} }$ is complete in $\mathfrak{H}.$}

\end{teo}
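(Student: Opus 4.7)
The plan is to realize $\tilde{W}$ as the m-sectorial operator associated with the closure of the sesquilinear form $t[u,v]:=(Lu,v)_{\mathfrak{H}}$ on $\mathrm{D}(t)=\mathfrak{M}$, and then to reduce parts (\textbf{A})--(\textbf{C}) to spectral information for the self-adjoint operator $H=\mathrm{Re}\,\tilde{W}$, following the scheme of \cite{firstab_lit(arXiv non-self)kukushkin2018}, which this theorem slightly generalizes.

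First I would verify that $t$ is closable, strictly accretive, and sectorial. Hypothesis (H2) gives $\mathrm{Re}\,t[f]\geq C_{2}\|f\|_{\mathfrak{H}_{+}}^{2}\geq C_{2}C_{0}^{-2}\|f\|_{\mathfrak{H}}^{2}$, and combined with $|t[f,g]|\leq C_{1}\|f\|_{\mathfrak{H}_{+}}\|g\|_{\mathfrak{H}_{+}}$ it yields $|\mathrm{Im}\,t[f]|\leq(C_{1}/C_{2})\,\mathrm{Re}\,t[f]$, so $t$ is sectorial of semi-angle at most $\arctan(C_{1}/C_{2})$. Theorems 1.27 and 2.7 of \cite{firstab_lit:kato1980} then produce $\tilde{t}$ and the unique associated m-sectorial operator $T_{\tilde{t}}=\tilde{W}$, with $H=T_{\mathfrak{h}}$ where $\mathfrak{h}=\mathrm{Re}\,\tilde{t}$ satisfies $\mathfrak{h}[f]\geq C_{2}\|f\|_{\mathfrak{H}_{+}}^{2}$. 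Since $\mathrm{D}(\mathfrak{h})=\mathrm{D}(H^{1/2})$ embeds continuously in $\mathfrak{H}_{+}$, which by (H1) is compactly embedded in $\mathfrak{H}$, the operator $H^{-1}$ is compact and $H$ is strictly positive self-adjoint.

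The core technical step is a symmetric factorization of $R_{\tilde{W}}$ through $H^{-1/2}$. The relative form bound $|\mathfrak{k}[u]|\leq(C_{1}/C_{2})\,\mathfrak{h}[u]$ allows one to produce a bounded self-adjoint operator $B$ on $\mathfrak{H}$ such that $\mathfrak{k}[u,v]=(BH^{1/2}u,H^{1/2}v)_{\mathfrak{H}}$ on $\mathrm{D}(H^{1/2})$; from the representation theorem for closed sectorial forms this leads to
\[
R_{\tilde{W}}=H^{-1/2}(I+iB)^{-1}H^{-1/2},\qquad \|(I+iB)^{-1}\|\leq 1.
\]
Using the min-max characterization of singular numbers together with this symmetric three-term product, one derives $s_{n}(R_{\tilde{W}})\leq C\,\lambda_{n}(R_{H})$. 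For (\textbf{A}) the direct classification is then immediate from the definition of the order $\mu$, and the converse, under the extra lower bound $\lambda_{n}(R_{H})\geq C n^{-\mu}$, follows from a matching lower estimate extracted from the same factorization by bounding $\|I+iB\|$.

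For (\textbf{B}), Weyl's inequality $\sum_{i=1}^{n}|\lambda_{i}(R_{\tilde{W}})|^{p}\leq\sum_{i=1}^{n}s_{i}^{p}(R_{\tilde{W}})$ combined with the singular-number bound just obtained yields \eqref{4}; the asymptotic $|\lambda_{n}(R_{\tilde{W}})|=o(n^{-\mu+\varepsilon})$ is then extracted by choosing $p$ close to $1/\mu$ in \eqref{4} and summing by parts. For (\textbf{C}), the hypothesis $\Theta(\tilde{W})\subset\mathfrak{L}_{0}(\theta)$ with $\theta<\pi\mu/2$, together with the decay $s_{n}(R_{\tilde{W}})=O(n^{-\mu})$ from (\textbf{A}), places $R_{\tilde{W}}$ in the classical Keldysh completeness regime exploited in \cite{firstab_lit:Markus Matsaev} and \cite{firstab_lit:Shkalikov A.}, and an appeal to the corresponding abstract theorem produces completeness of the root vector system. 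The main obstacle I anticipate is the sharp singular-number comparison $s_{n}(R_{\tilde{W}})\leq C\,\lambda_{n}(R_{H})$: naive bounds of the form $s_{n}(AXB)\leq\|A\|\,s_{n}(X)\,\|B\|$ applied directly to the factorization collapse the exponent from $\mu$ to $\mu/2$, so one must exploit the symmetric structure of $H^{-1/2}(I+iB)^{-1}H^{-1/2}$ via a polar-decomposition argument; secondarily, the $o(n^{-\mu+\varepsilon})$ refinement in (\textbf{B}) is not a formal consequence of Schatten membership and requires a careful Abel-type extraction.
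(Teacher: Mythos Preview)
Your overall strategy matches the paper's: both pass to the closed sectorial form, invoke Kato's representation theorems to obtain the factorization $R_{\tilde W}=H^{-1/2}(I+iB)^{-1}H^{-1/2}$ with $B=B^{*}$ bounded, and then reduce (\textbf{A})--(\textbf{C}) to spectral data of $R_{H}$ via the machinery of \cite{firstab_lit(arXiv non-self)kukushkin2018}. Two genuine gaps remain, however.

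First, the identification $T_{\tilde t}=\tilde W$ is not automatic and you have not justified it. Kato's Theorems 1.27 and 2.7 only produce the Friedrichs extension $T_{\tilde t}$, and in general one has a strict inclusion $\tilde W\subsetneq T_{\tilde t}$ (any symmetric operator with nonzero deficiency indices already gives a counterexample). Since the theorem is stated for $R_{\tilde W}$, not for $R_{T_{\tilde t}}$, you must show that $\tilde W$ is itself m-sectorial, so that it admits no proper accretive extension. The paper does this explicitly before touching the form: it verifies directly from (H2) that $\tilde W$ is sectorial with a positive vertex, and then invokes Lemmas~1--2 of \cite{firstab_lit(arXiv non-self)kukushkin2018} to conclude that $\tilde W$ is m-accretive; only afterwards does the first representation theorem identify $\tilde W$ with the operator associated to a closed form.

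Second, for the sharp comparison $s_{n}(R_{\tilde W})\leq C\,\lambda_{n}(R_{H})$ you correctly flag the $\mu\mapsto\mu/2$ collapse, but the proposed cure (``a polar-decomposition argument'') is not the mechanism the paper uses, and it is not clear how a polar decomposition of either $R_{\tilde W}$ or $(I+iB)^{-1}$ would produce the estimate without reintroducing a stray factor $\|H^{-1/2}\|$. The paper's device is instead the explicit identity
\[
\mathfrak{Re}\,R_{\tilde W}=\tfrac{1}{2}\,H^{-1/2}(I+B^{2})^{-1}H^{-1/2},
\]
obtained by averaging $R_{\tilde W}=H^{-1/2}(I+iB)^{-1}H^{-1/2}$ with its adjoint $R_{W^{*}}=H^{-1/2}(I-iB)^{-1}H^{-1/2}$. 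Since $(1+\|B\|^{2})^{-1}I\leq(I+B^{2})^{-1}\leq I$, this yields the two-sided bound $\lambda_{n}(\mathfrak{Re}\,R_{\tilde W})\asymp\lambda_{n}(R_{H})$; the passage from $\lambda_{n}(\mathfrak{Re}\,R_{\tilde W})$ to $s_{n}(R_{\tilde W})$ (and hence (\textbf{A})--(\textbf{C})) is then exactly the content of Theorem~5 and Theorems~7--9 of \cite{firstab_lit(arXiv non-self)kukushkin2018}, which the paper simply re-runs with $H:=Re\,\tilde W$. Without this real-component formula, or a concrete substitute for it, your argument for the singular-value comparison is incomplete.
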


\begin{proof}
Note that due to the first  condition  H2,    by virtue of   Theorem 3.4 \cite[p.268]{firstab_lit:kato1980} the  operator  $W$  is closable.
Let us show that $\tilde{W}$ is sectorial. By virtue of condition H2, we get
$$
\mathrm{Re}(\tilde{W}f,f)_{\mathfrak{H}}\geq C_{2}\|f\|^{2}_{\mathfrak{H_{+}}}\geq C_{2}\varepsilon\|f\|^{2}_{\mathfrak{H}_{+}}+\frac{C_{2}(1-\varepsilon)}{C_{0}}\|f\|^{2}_{\mathfrak{H}};
$$
$$
\mathrm{Re}(\tilde{W}f,f)_{\mathfrak{H}} -k|\mathrm{Im}(\tilde{W}f,f)_{\mathfrak{H}}|\geq (C_{2}\varepsilon-k C_{1})\|f\|^{2}_{\mathfrak{H}_{+}}  +\frac{C_{2}(1-\varepsilon)}{C_{0}}\|f\|^{2}_{\mathfrak{H}}=\frac{C_{2}(1-\varepsilon)}{C_{0}}\|f\|^{2}_{\mathfrak{H}},
$$
where
$
 k=  \varepsilon C_{2}/C_{1}.
$
Hence $\Theta(\tilde{W})\subset \mathfrak{L}_{\gamma}(\theta),\,\gamma=C_{2}(1-\varepsilon)/C_{0}.$ Thus, the claim of Lemma  1 \cite{firstab_lit(arXiv non-self)kukushkin2018}  is true regarding   the operator $\tilde{W}.$   Using this fact, we conclude that the claim of   Lemma   2 \cite{firstab_lit(arXiv non-self)kukushkin2018} is true regarding the operator $\tilde{W}$ i.e.  $\tilde{W}$ is m-accretive.

Using the first representation theorem (Theorem 2.1 \cite[p.322]{firstab_lit:kato1980})  we have a one-to-one correspondence between m-sectorial operators and closed sectorial sesquilinear forms i.e. $\tilde{W}=T_{t}$ by symbol,  where $t$ is a sesquilinear form corresponding to the operator $\tilde{W}.$  Hence $H:=Re\, \tilde{W}$ is defined (see \cite[p.337]{firstab_lit:kato1980}). In accordance with Theorem 2.6 \cite[p.323]{firstab_lit:kato1980} the operator $H$ is selfadjoint, strictly accretive.

 A compact embedding   provided by the relation $\mathfrak{h}[f]\geq C_{2} \|f\|_{\mathfrak{H}_{+}} \geq C_{2}/C_{0}\|f\|_{\mathfrak{H}},\,f\in \mathrm{D}(h)$ proves that $R_{H}$ is compact (see proof of  Theorem  4  \cite{firstab_lit(arXiv non-self)kukushkin2018}) and as a result   of  the application of  Theorem 3.3   \cite[p.337]{firstab_lit:kato1980}, we get $R_{\tilde{W}}$ is compact. Thus the  claim of  Theorem  4  \cite{firstab_lit(arXiv non-self)kukushkin2018} remains true regarding   the operators $R_{H},\,R_{\tilde{W}}.$

 In accordance with   Theorem 2.5 \cite[p.323]{firstab_lit:kato1980} , we get $ W ^{\ast}=T_{t^{\ast}}$  (since $W^{\ast}=\tilde{W}^{\ast}$).
Now if we denote $t_{1}:=t^{\ast},$  then  it is easy to calculate $\mathfrak{k} =-\mathfrak{k}_{1}.$   Since $t$ is sectorial, than $|\mathfrak{k}_{1}|\leq \tan \theta\cdot \mathfrak{h}.$ Hence, in accordance with Lemma 3.1 \cite[p.336]{firstab_lit:kato1980}, we get  $\mathfrak{k} [u,v]=(BH^{1/2}u,H^{1/2}v),\,\mathfrak{k}_{1}[u,v]=-(BH^{1/2}u,H^{1/2}v),\,u,v\in \mathrm{D}(H^{1/2}),$ where    $B\in  \mathcal{B}(\mathfrak{H}) $ is a symmetric operator. Let us prove that $B$ is selfadjoint.  Note that in accordance with  Lemma 3.1 \cite[p.336]{firstab_lit:kato1980} $\mathrm{D}(B)=\mathrm{R}(H^{1/2}),$    in accordance with   Theorem 2.1 \cite[p.322]{firstab_lit:kato1980}, we have   $(Hf,f)_{\mathfrak{H}}\geq C_{2}/C_{0}\|f\|_{\mathfrak{H}}^{2},\,f\in \mathrm{D}(H),$ using the reasonings of Theorem  5 \cite{firstab_lit(arXiv non-self)kukushkin2018}, we conclude that   $\mathrm{R}(H^{1/2})=\mathfrak{H}$ i.e. $\mathrm{D}(B)=\mathfrak{H}.$  Hence $B$ is selfadjoint.  Using Lemma 3.2 \cite[p.337]{firstab_lit:kato1980}, we obtain a representation $\tilde{W}=H^{1/2}(I+iB) H^{1/2},\,W^{\ast}=H^{1/2}(I-iB) H^{1/2}.$  Noting the fact   $\mathrm{D}(B)=\mathfrak{H},$  we can easily obtain $ (I\pm iB)^{\ast}= I \mp iB.$ Since $B$ is selfadjoint, then
$\mathrm{Re}([I\pm iB]f,f)_{\mathfrak{H}}=\|f\|^{2}_{\mathfrak{H}}.$ Using this fact and  applying Theorem 3.2 \cite[p.268]{firstab_lit:kato1980}, we conclude that $\mathrm{R}(I\pm iB)$ is a closed set. Since $\mathrm{N}(I\pm iB)=0,$ then $\mathrm{R}(I \mp iB)=\mathfrak{H}$ (see (3.2) \cite[p.267]{firstab_lit:kato1980}). Thus, we obtain   $(I\pm i B)^{-1}\in \mathcal{B}(\mathfrak{H}).$ Taking into account the above facts, we get $R_{\tilde{W}}=H^{-1/2}(I+iB)^{-1} H^{-1/2},\,R_{W^{\ast}}=H^{-1/2}(I-iB)^{-1} H^{-1/2}.$ In accordance with the well-known theorem (see Theorem 5 \cite[p.557]{firstab_lit:Smirnov5}), we have $R^{\ast}_{\tilde{W} }=R_{W^{\ast}}.$ Note that the relations   $(I\pm i B)\in \mathcal{B}(\mathfrak{H}),\,(I\pm i B)^{-1}\in \mathcal{B}(\mathfrak{H}),\,H^{-1/2}\in \mathcal{B}(\mathfrak{H})$  allow  as to obtain the following formula by direct calculations
$$
\mathfrak{Re} R _{\tilde{W} }    =\frac{1}{2}H^{-1/2}(I+ B^{2})^{-1} H^{-1/2}.
$$
  This formula is a crucial point of the matter, we can repeat the rest part of the proof of    Theorem  5 \cite{firstab_lit(arXiv non-self)kukushkin2018}  in terms $H:= Re \, \tilde{W}.$  By virtue of these facts  Theorems  7-9 \cite{firstab_lit(arXiv non-self)kukushkin2018},  can be  reformulated in terms  $H:= Re \, \tilde{W} ,$    since they are based on Lemmas  1,  3, Theorems  4,  5 \cite{firstab_lit(arXiv non-self)kukushkin2018}.\\
\end{proof}

\begin{remark}\label{R1}
Consider  a condition  $\mathfrak{M}\subset \mathrm{D}( W ^{\ast}),$ in this case the operator $\mathcal{H}:=\mathfrak{Re }\,W$ is defined on $\mathfrak{M},$ the fact is that $\tilde{\mathcal{H}}$ is selfadjoint,    bounded  from bellow (see Lemma  3 \cite{firstab_lit(arXiv non-self)kukushkin2018}). Hence a corresponding sesquilinear  form (denote this form by $h$) is symmetric and  bounded from bellow also (see Theorem 2.6 \cite[p.323]{firstab_lit:kato1980}). It can be easily shown  that $h\subset   \mathfrak{h},$  but using this fact    we cannot claim in general that $\tilde{\mathcal{H}}\subset H$ (see \cite[p.330]{firstab_lit:kato1980} ). We just have an inclusion   $\tilde{\mathcal{H}}^{1/2}\subset H^{1/2}$     (see \cite[p.332]{firstab_lit:kato1980}). Note that the fact $\tilde{\mathcal{H}}\subset H$ follows from a condition $ \mathrm{D}_{0}(\mathfrak{h})\subset \mathrm{D}(h) $ (see Corollary 2.4 \cite[p.323]{firstab_lit:kato1980}).
 However, it is proved (see proof of Theorem  4 \cite{firstab_lit(arXiv non-self)kukushkin2018}) that relation H2 guaranties that $\tilde{\mathcal{H}}=H.$ Note that the last relation is very useful in applications, since in most concrete cases we can find a concrete form of the operator $\mathcal{H}.$

\end{remark}

\vspace{0.5cm}

\noindent{\bf 2. Transform}\\

Consider a transform of an m-accretive operator $J$ acting in $\mathfrak{H}$
\begin{equation}\label{12.0.1}
 Z^{\alpha}_{G,F}(J):= J^{\ast}GJ+FJ^{\alpha},\,\alpha\in [0,1),
\end{equation}
where symbols  $G,F$  denote  operators acting in $\mathfrak{H}.$    Further, using a relation $L= Z^{\alpha}_{G,F}(J)$ we mean that there exists an appropriate representation for the operator $L.$
\noindent The following theorem gives us a tool to describe spectral properties  of transform  \eqref{12.0.1},
  as it will be shown   further  it has an important  application in fractional calculus  since  allows   to represent fractional differential  operators as a transform of the infinitesimal  generator of a    semigroup.

\begin{teo}\label{T1}
 Assume that  the operator   $J$ is m-accretive,    $  J^{-1} $ is compact, $G$ is bounded, strictly accretive, with  a lower bound $\gamma_{G}> C_{ \alpha} \|J^{-1}\|\cdot \|F\|,\; \mathrm{D}(G)\supset \mathrm{R}(J),$   $F\in \mathcal{B}(\mathfrak{H}),$ where $C_{\alpha}$  is a   constant   \eqref{15.09}.
 Then   $Z^{\alpha}_{G,F}(J)$ satisfies  conditions  H1 -  H2.

\end{teo}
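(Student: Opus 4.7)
The plan is to take $\mathfrak{H}_+ := \mathfrak{H}_J$ (so $\|f\|_{\mathfrak{H}_+} = \|Jf\|_{\mathfrak{H}}$) and $\mathfrak{M} := \{f \in \mathrm{D}(J) : GJf \in \mathrm{D}(J^*)\}$, on which $Lf := J^*GJf + FJ^{\alpha}f$ is well-defined. The only nonobvious ingredient is that $J^{\alpha}$ acts on $\mathrm{D}(J)$: from the integral formula \eqref{9} together with the commutation $(\lambda+J)^{-1}Jf = J(\lambda+J)^{-1}f$ valid on $\mathrm{D}(J)$, one rewrites $J^{\alpha}f = J^{-(1-\alpha)}Jf$, and Lemma \ref{L3.1} supplies the bound $\|J^{\alpha}f\|_{\mathfrak{H}} \leq C_{\alpha}\|Jf\|_{\mathfrak{H}}$.

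I would first verify $\mathfrak{H}_+ \subset\subset \mathfrak{H}$. Since $J^{-1}$ is compact and everywhere defined (so $\mathrm{R}(J) = \mathfrak{H}$), the map $J : \mathfrak{H}_+ \to \mathfrak{H}$ is an isometric isomorphism; the bounded embedding is $\|f\|_{\mathfrak{H}} \leq \|J^{-1}\|\,\|f\|_{\mathfrak{H}_+}$, and any bounded sequence in $\mathfrak{H}_+$ is the $J^{-1}$-image of a bounded sequence in $\mathfrak{H}$, hence relatively compact in $\mathfrak{H}$ by compactness of $J^{-1}$.

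The main technical step is density of $\mathfrak{M}$ in $\mathfrak{H}_+$. The key observation is that $G$, being bounded and strictly accretive, is a bijection of $\mathfrak{H}$ with bounded inverse: $\|Gf\| \geq \gamma_G\|f\|$ gives injectivity and closed range, and applying the same inequality to $G^*$ (which inherits strict accretivity with the same bound from the identity $\mathrm{Re}(G^*f,f)=\mathrm{Re}(Gf,f)$) shows $\mathrm{R}(G) = \mathrm{N}(G^*)^{\perp} = \mathfrak{H}$. Since $J$ is closed and densely defined, $\mathrm{D}(J^*)$ is dense in $\mathfrak{H}$. Given $g \in \mathrm{D}(J) = \mathfrak{H}_+$, one approximates $G(Jg)$ by $w_n \in \mathrm{D}(J^*)$ and sets $f_n := J^{-1}G^{-1}w_n$; then $f_n \in \mathfrak{M}$ because $GJf_n = w_n \in \mathrm{D}(J^*)$, and $\|f_n - g\|_{\mathfrak{H}_+} = \|G^{-1}w_n - Jg\|_{\mathfrak{H}} \to 0$.

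Then H2 reduces to two direct estimates on $\mathfrak{M}$. Using the identity $(J^*GJf,g)_{\mathfrak{H}} = (GJf,Jg)_{\mathfrak{H}}$, Cauchy--Schwarz, the bound on $J^{\alpha}$ above, and the embedding $\|g\|_{\mathfrak{H}} \leq \|J^{-1}\|\,\|g\|_{\mathfrak{H}_+}$, one obtains $|(Lf,g)_{\mathfrak{H}}| \leq (\|G\| + \|F\|C_{\alpha}\|J^{-1}\|)\|f\|_{\mathfrak{H}_+}\|g\|_{\mathfrak{H}_+}$; and strict accretivity of $G$ combined with the same lower-order estimate for $FJ^{\alpha}$ yields $\mathrm{Re}(Lf,f)_{\mathfrak{H}} \geq (\gamma_G - \|F\|C_{\alpha}\|J^{-1}\|)\|f\|_{\mathfrak{H}_+}^2$, with the coefficient positive precisely by the numerical hypothesis on $\gamma_G$. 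The main obstacle is the density argument; once the choice of $\mathfrak{M}$ is justified through the invertibility of $G$, everything else is routine.
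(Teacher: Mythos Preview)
Your proof is correct, and the overall architecture—taking $\mathfrak{H}_{+}=\mathfrak{H}_{J}$, $\mathfrak{M}=\mathrm{D}(J^{\ast}GJ)$, reducing $J^{\alpha}$ to $J^{-(1-\alpha)}J$ via Lemma~\ref{L3.1}, and then reading off the two inequalities in H2—coincides with the paper's. The one place you diverge is the density argument. The paper proves that $\mathrm{D}(J^{\ast}GJ)$ is a core of $J$ by invoking the Lax--Milgram theorem on the form $(GJu,Jv)_{\mathfrak{H}}$ together with Kato's result that $\mathrm{D}(J^{\ast}J)$ is already a core of $J$ (Theorem 3.24 in \cite{firstab_lit:kato1980}), and then runs an approximation through that core. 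You instead observe directly that a bounded strictly accretive $G$ is a bijection of $\mathfrak{H}$ with bounded inverse, so that $f_{n}:=J^{-1}G^{-1}w_{n}$ with $w_{n}\in\mathrm{D}(J^{\ast})$ approximating $GJg$ does the job. Your route is more elementary (no Lax--Milgram, no appeal to the $J^{\ast}J$ core theorem) and makes explicit why $\mathrm{R}(J)=\mathfrak{H}$ matters; the paper's route, by contrast, is framed so as to parallel von~Neumann's theorem on $J^{\ast}J$, which the author highlights as a side result in the Conclusions.
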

\begin{proof}
Since  $J$ is m-accretive, then  it  is    closed, densely defined (see \cite[p.279]{firstab_lit:kato1980},  using the fact that $(J+\lambda)^{-1},\,(\lambda>0)$ is a closed operator, we conclude that $J$ is closed also).
Firstly,  we want to check fulfilment of condition $\mathrm{H1}.$  Let us choose  a space $\mathfrak{H}_{J}$ as a space   $\mathfrak{H}_{+}.$        Since $J^{-1}$ is compact, then   we conclude that the following   relation holds
 $\|f\|_{\mathfrak{H}}\leq \|J^{-1}\| \cdot \|Jf\|_{\mathfrak{H}},\,f\in \mathrm{D}(J) $ and    the embedding provided by this   inequality is compact.    Thus condition H1 is satisfied.

Let us prove that  $\mathrm{D}(J^{\ast}GJ)$ is a core of $J.$
Consider a space $\mathfrak{H}_{J}$ and a sesquilinear form
$$
 l_{G}(u,v):=(GJu,Jv)_{\mathfrak{H}} ,\;u,v\in \mathrm{D}(J).
$$
Observe that this form is a bounded functional on $\mathfrak{H}_{J},$ since we have
$
 |(GJu,Jv)_{\mathfrak{H}}|\leq \|G\|\cdot \|Ju\|_{\mathfrak{H}}  \|Jv\|_{\mathfrak{H}}.
$
Hence using the Riesz representation theorem, we have
$$
\forall z\in \mathrm{D}(J),\,   \exists f\in \mathrm{D}(J):\,(GJz,Jv)_{\mathfrak{H}}=(Jf,Jv)_{\mathfrak{H}}.
$$
On the other hand,   due to the properties of the operator $G,$ it is clear that  the conditions of the Lax-Milgram theorem are satisfied  i.e.
$
|(GJu,Jv)_{\mathfrak{H}}|\leq \|G\|\cdot\|Ju\|_{\mathfrak{H}} \|Jv\|_{\mathfrak{H}},\;|(GJu,Ju)_{\mathfrak{H}}|\geq \gamma_{G} \|Ju\|^{2}_{\mathfrak{H}}.
$
 Note that,   in accordance with  Theorem 3.24 \cite[p.275]{firstab_lit:kato1980} the set
$  \mathrm{D} (J^{\ast}J)$ is a core of $J$  i.e.
$$
\forall f\in \mathrm{D}(J),\,\exists \{f_{n}\}_{1}^{\infty}\subset \mathrm{D} (J^{\ast}J):   \,f_{n}\xrightarrow[ J ]{}f.
$$
 Using the Lax-Milgram theorem, in the previously used terms, we get
$$
   \forall f_{n},\, n\in \mathbb{N},\,    \exists z_{n}\in \mathrm{D}(J):\,(GJz_{n},Jv)_{\mathfrak{H}}=(Jf_{n},Jv)_{\mathfrak{H}}.
$$
Combining the above relations, we obtain
$$
(GJ\xi_{n},Jv)_{\mathfrak{H}}=(J\psi_{n},Jv)_{\mathfrak{H}},\,
$$
where $\xi_{n}:=z-z_{n},\,\psi_{n}:=f-f_{n}.$
  Using the strictly accretive property of the operator $G,$ we have
$$
\|J\xi_{n}\|_{\mathfrak{H}}^{2}\gamma_{G} \leq|(GJ\xi_{n},J\xi_{n})_{\mathfrak{H}}|=|(J\psi_{n},J\xi_{n})_{\mathfrak{H}}|\leq\|J\psi_{n}\|_{\mathfrak{H}}\|J\xi_{n}\|_{\mathfrak{H}} .
$$
Taking into account that $J^{-1}$ is bounded, we obtain
$$
K_{1}\| \xi_{n}\|_{\mathfrak{H}}\leq\|J\xi_{n}\|_{\mathfrak{H}} \leq K_{2}\|J\psi_{n}\|_{\mathfrak{H}},\; K_{1},K_{2}>0,
$$
from what follows that
$$
  Jz_{n} \xrightarrow[   ]{\mathfrak{H}}Jz .
$$
On the other hand, we have
$$
(GJz_{n},Jv)_{\mathfrak{H}}=(Jf_{n}, Jv)_{\mathfrak{H}}=(J^{\ast}Jf_{n}, v)_{\mathfrak{H}},\,v\in \mathrm{D}(J).
$$
Hence $\{z_{n}\}_{1}^{\infty}\subset \mathrm{D}(J^{\ast}GJ).$
Taking into account  the above reasonings, we conclude that $\mathrm{D}(J^{\ast}GJ)$ is a core of $J.$ Thus, we have obtained the desired result.

   Note that $\mathrm{D}_{0}(J)$ is dense in $\mathfrak{H},$ since $J$ is densely defined. We have proved above
$$
\mathrm{Re}\left(J^{\ast}GJf,f\right)_{\mathfrak{H}}=\mathrm{Re}\left( GJf,Jf\right)_{\mathfrak{H}}\geq  \gamma_{G}\|f\|^{2}_{\mathfrak{H}_{J}},\,
$$
$$
 \left|\left(J^{\ast}GJf,g\right)_{\mathfrak{H}}\right|=\left|\left( GJf,Jg\right)_{\mathfrak{H}}\right|\leq \|G\|\cdot\|Jf\|_{\mathfrak{H}}\|Jg\|_{\mathfrak{H}},\,f,g\in  \mathrm{D}_{0}(J).
$$
Similarly, we get
\begin{equation}\label{15}
|(FJ^{\alpha}f,g)_{\mathfrak{H}}|\leq \|FJ^{\alpha}f\|_{\mathfrak{H}}\|g\|_{\mathfrak{H}}\leq \|J^{-1}\|\cdot\|F\|\cdot\| J^{\alpha}f\|_{\mathfrak{H}}\|Jg\|_{\mathfrak{H}},\,f,g\in \mathrm{D}_{0}(J) .
\end{equation}
In accordance with \eqref{9}, we have $J^{\alpha-1}J\subset J^{\alpha}.$
 Therefore, using Lemma \ref{L3.1}, we obtain
\begin{equation}\label{15.0.1}
\|J^{\alpha}f\|_{\mathfrak{H}}= \|J^{\alpha-1}Jf\|_{\mathfrak{H}}\leq C_{  \alpha  }\| Jf\|_{\mathfrak{H}},\,f\in \mathrm{D}_{0}(J).
\end{equation}
  Combining this fact with \eqref{15},  we obtain
\begin{equation*}
|(FJ^{\alpha}f,g)_{\mathfrak{H}}| \leq  C_{  \alpha }\|J^{-1}\|\cdot \|F\|\cdot \|  f\|_{\mathfrak{H}_{J}}\|g\|_{\mathfrak{H}_{J}},\,f,g\in \mathrm{D}_{0}(J),
\end{equation*}
(the  case corresponding to $ \alpha=0 $ is trivial, since the operator $J^{-1}$ is bounded).
 It follows that
$$
\mathrm{Re}(FJ^{\alpha}f,f)\geq- C_{ \alpha}\|J^{-1}\|\cdot \|F\|\cdot \|  f\|^{2}_{\mathfrak{H}_{J}},\,f\in \mathrm{D}_{0}(J).
$$
Combining the above facts, we obtain fulfillment of  condition  $ \mathrm{H2}.$

\end{proof}

\begin{deff}
  Define an operator class $\mathfrak{G_{\alpha}} :=\{W:\,W\!= Z^{\alpha}_{G,F}(J) \},$ where $G,F,J$ satisfy  the conditions   of Theorem \ref{T1}.
\end{deff}

\vspace{0.5cm}

\noindent{\bf 3. The  model}\\

In this section we consider various   operators  acting in a complex separable  Hilbert space  for which   Theorem \ref{T1a} can be applied,  the given bellow results also   cover  a case $\alpha=0$ after   minor changes  which are omitted due to simplicity.  In accordance with Remark \ref{R1}, we will stress cases when the relation $\tilde{\mathcal{H}}=H$ can be obtained.  \\

\noindent {\bf   Kipriyanov operator }\\

Here, we study a case $\alpha\in (0,1).$ Assume that  $\Omega\subset \mathbb{E}^{n}$ is  a convex domain, with a sufficient smooth boundary ($ C ^{3}$ class)   of the n-dimensional Euclidian space. For the sake of the simplicity we consider that $\Omega$ is bounded, but  the results  can be extended     to some type of    unbounded domains.
In accordance with the definition given in  the paper  \cite{firstab_lit:1kukushkin2018}, we consider the directional  fractional integrals.  By definition, put
$$
 (\mathfrak{I}^{\alpha}_{0+}f)(Q  ):=\frac{1}{\Gamma(\alpha)} \int\limits^{r}_{0}\frac{f (P+t \mathbf{e} )}{( r-t)^{1-\alpha}}\left(\frac{t}{r}\right)^{n-1}\!\!\!\!dt,\,(\mathfrak{I}^{\alpha}_{d-}f)(Q  ):=\frac{1}{\Gamma(\alpha)} \int\limits_{r}^{d }\frac{f (P+t\mathbf{e})}{(t-r)^{1-\alpha}}\,dt,
$$
$$
\;f\in L_{p}(\Omega),\;1\leq p\leq\infty.
$$
Also,     we   consider auxiliary operators,   the so-called   truncated directional  fractional derivatives    (see \cite{firstab_lit:1kukushkin2018}).  By definition, put
 \begin{equation*}
 ( \mathfrak{D} ^{\alpha}_{0+,\,\varepsilon}f)(Q)=\frac{\alpha}{\Gamma(1-\alpha)}\int\limits_{0}^{r-\varepsilon }\frac{ f (Q)r^{n-1}- f(P+\mathbf{e}t)t^{n-1}}{(  r-t)^{\alpha +1}r^{n-1}}   dt+\frac{f(Q)}{\Gamma(1-\alpha)} r ^{-\alpha},\;\varepsilon\leq r\leq d ,
 $$
 $$
 (\mathfrak{D}^{\alpha}_{0+,\,\varepsilon}f)(Q)=  \frac{f(Q)}{\varepsilon^{\alpha}}  ,\; 0\leq r <\varepsilon ;
\end{equation*}
\begin{equation*}
 ( \mathfrak{D }^{\alpha}_{d-,\,\varepsilon}f)(Q)=\frac{\alpha}{\Gamma(1-\alpha)}\int\limits_{r+\varepsilon }^{d }\frac{ f (Q)- f(P+\mathbf{e}t)}{( t-r)^{\alpha +1}} dt
 +\frac{f(Q)}{\Gamma(1-\alpha)}(d-r)^{-\alpha},\;0\leq r\leq d -\varepsilon,
 $$
 $$
  ( \mathfrak{D }^{\alpha}_{d-,\,\varepsilon}f)(Q)=      \frac{ f(Q)}{\alpha} \left(\frac{1}{\varepsilon^{\alpha}}-\frac{1}{(d -r)^{\alpha} }    \right),\; d -\varepsilon <r \leq d .
 \end{equation*}
  Now, we can  define  the directional   fractional derivatives as follows
 \begin{equation*}
 \mathfrak{D }^{\alpha}_{0+}f=\lim\limits_{\stackrel{\varepsilon\rightarrow 0}{ (L_{p}) }} \mathfrak{D }^{\alpha}_{0+,\varepsilon} f  ,\;
  \mathfrak{D }^{\alpha}_{d-}f=\lim\limits_{\stackrel{\varepsilon\rightarrow 0}{ (L_{p}) }} \mathfrak{D }^{\alpha}_{d-,\varepsilon} f ,\,1\leq p\leq\infty.
\end{equation*}
The properties of these operators  are  described  in detail in the paper  \cite{firstab_lit:1kukushkin2018}. Similarly to the monograph \cite{firstab_lit:samko1987} we consider   left-side  and  right-side cases. For instance, $\mathfrak{I}^{\alpha}_{0+}$ is called  a left-side directional  fractional integral  and $ \mathfrak{D }^{\alpha}_{d-}$ is called a right-side directional fractional derivative. We suppose  $\mathfrak{I}^{0}_{0+} =I.$ Nevertheless,   this    fact can be easily proved dy virtue of  the reasonings  corresponding to the one-dimensional case and   given in \cite{firstab_lit:samko1987}. We also consider integral operators with a weighted factor (see \cite[p.175]{firstab_lit:samko1987}) defined by the following formal construction
$$
 \left(\mathfrak{I}^{\alpha}_{0+}\mu f\right) (Q  ):=\frac{1}{\Gamma(\alpha)} \int\limits^{r}_{0}
 \frac{(\mu f) (P+t\mathbf{e})}{( r-t)^{1-\alpha}}\left(\frac{t}{r}\right)^{n-1}\!\!\!\!dt,
$$
where $\mu$ is a real-valued  function.

Consider a linear combination of an uniformly elliptic operator, which is written in the divergence form, and
  a composition of a   fractional integro-differential  operator, where the fractional  differential operator is understood as the adjoint  operator  regarding  the Kipriyanov operator  (see  \cite{firstab_lit:kipriyanov1960},\cite{firstab_lit:1kipriyanov1960},\cite{kukushkin2019})
\begin{equation*}
 L  :=-  \mathcal{T}  \, +\mathfrak{I}^{\sigma}_{ 0+}\rho\, \mathfrak{D}  ^{ \alpha }_{d-},
\; \sigma\in[0,1) ,
 $$
 $$
   \mathrm{D}( L )  =H^{2}(\Omega)\cap H^{1}_{0}( \Omega ),
  \end{equation*}
where
$\,\mathcal{T}:=D_{j} ( a^{ij} D_{i}\cdot),\,i,j=1,2,...,n,$
under    the following  assumptions regarding        coefficients
\begin{equation} \label{16}
     a^{ij}(Q) \in C^{2}(\bar{\Omega}),\, \mathrm{Re} a^{ij}\xi _{i}  \xi _{j}  \geq   \gamma_{a}  |\xi|^{2} ,\,  \gamma_{a}  >0,\,\mathrm{Im }a^{ij}=0 \;(n\geq2),\,
 \rho\in L_{\infty}(\Omega).
\end{equation}
Note that in the one-dimensional case the operator $\mathfrak{I}^{\sigma }_{ 0+} \rho\, \mathfrak{D}  ^{ \alpha }_{d-}$ is reduced to   a  weighted fractional integro-differential operator  composition, which was studied properly  by many researchers (see introduction, \cite[p.175]{firstab_lit:samko1987}).
Consider a shift semigroup in a direction acting on $L_{2}(\Omega)$ and  defined as follows
$
T_{t}f(Q):=f(P+\mathbf{e}[r+ t])=f(Q+\mathbf{e}t).
$
We can formulate the following proposition.

\begin{lem}\label{L4}
The semigroup $T_{t}$ is a $C_{0}$  semigroup of contractions.
\end{lem}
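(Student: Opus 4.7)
The plan is to verify the three defining properties of a $C_0$-semigroup of contractions in turn: (i) the semigroup identity $T_{t+s}=T_t T_s$ with $T_0=I$, (ii) the contractive bound $\|T_t f\|_{L_2(\Omega)}\le \|f\|_{L_2(\Omega)}$, and (iii) strong continuity $\|T_t f-f\|_{L_2(\Omega)}\to 0$ as $t\to 0+$. Throughout, recall that $\mathbf{e}=\mathbf{e}(Q)=(Q-P)/|Q-P|$ depends on the point $Q$, and all functions are extended by zero outside $\bar\Omega$.

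For the semigroup identity, the key observation is that if $Q=P+r\mathbf{e}(Q)$, then $Q+\mathbf{e}(Q)t=P+(r+t)\mathbf{e}(Q)$ lies on the same ray from $P$, hence $\mathbf{e}(Q+\mathbf{e}(Q)t)=\mathbf{e}(Q)$. Consequently $T_s(T_t f)(Q)=f(Q+\mathbf{e}(Q)t+\mathbf{e}(Q)s)=T_{t+s}f(Q)$, and $T_0=I$ is immediate.

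For contractivity I would pass to the polar-like decomposition \eqref{1a} with vertex $P$. Writing
\[
\|T_t f\|_{L_2(\Omega)}^{2}=\int_\omega d\chi\int_0^{d(\mathbf{e})}|f(P+(r+t)\mathbf{e})|^{2}\,r^{n-1}dr,
\]
and changing variables $s=r+t$ inside the inner integral, I get $\int_t^{d(\mathbf{e})+t}|f(P+s\mathbf{e})|^{2}(s-t)^{n-1}ds$; the zero extension kills the integrand for $s>d(\mathbf{e})$, and the elementary inequality $(s-t)^{n-1}\le s^{n-1}$ together with reinstating the integral over $[0,d(\mathbf{e})]$ yields $\|T_t f\|_{L_2(\Omega)}^{2}\le\|f\|_{L_2(\Omega)}^{2}$. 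This also shows each $T_t$ is a well-defined bounded operator on $L_2(\Omega)$.

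For strong continuity, the standard density argument works: for $f\in C_c(\Omega)$ the pointwise convergence $T_t f(Q)\to f(Q)$ holds for every $Q\in\Omega$ (for $t$ small enough, $Q+\mathbf{e}(Q)t$ still lies in a fixed neighborhood on which $f$ is uniformly continuous, and for $Q$ eventually pushed out of $\mathrm{supp}\,f$ the value is $0=f(Q)$), while $|T_tf(Q)-f(Q)|\le 2\|f\|_\infty\,\chi_\Omega(Q)$, so dominated convergence gives $\|T_tf-f\|_{L_2}\to 0$. Since $C_c(\Omega)$ is dense in $L_2(\Omega)$ and $\|T_t\|\le 1$ uniformly in $t$, the usual $3\varepsilon$-argument extends the conclusion to arbitrary $f\in L_2(\Omega)$.

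The only delicate step is the strong continuity when $f$ is merely $L_2$ and one must account for mass that the shift pushes across $\partial\Omega$ and thereby annihilates by the zero extension; this is precisely where the contraction bound plus density in $C_c(\Omega)$ takes care of the boundary effect, so I expect no genuine obstacle, only the need to handle the boundary carefully in the dominated convergence step.
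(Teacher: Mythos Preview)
Your proposal is correct and follows essentially the same route as the paper: both prove contractivity via the zero extension outside $\bar\Omega$ and establish strong continuity by a density/$3\varepsilon$ argument through smooth compactly supported functions. Your write-up is in fact more explicit --- you additionally verify the semigroup identity (which the paper omits) and spell out the contraction bound via the polar decomposition and the elementary inequality $(s-t)^{n-1}\le s^{n-1}$, whereas the paper simply asserts $\|T_t\|\le 1$ from the zero-extension convention.
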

 \begin{proof}
     By virtue of the continuous in average property, we conclude that $T_{t}$ is a strongly continuous semigroup. It can be easily established  due to the following reasonings, using the Minkowski inequality, we have
 $$
 \left\{\int\limits_{\Omega}|f(Q+\mathbf{e}t)-f(Q)|^{2}dQ\right\}^{\frac{1}{2}}\leq  \left\{\int\limits_{\Omega}|f(Q+\mathbf{e}t)-f_{m}(Q+\mathbf{e}t)|^{2}dQ\right\}^{\frac{1}{2}}+
 $$
 $$
 +\left\{\int\limits_{\Omega}|f(Q)-f_{m}(Q)|^{2}dQ\right\}^{\frac{1}{2}}+\left\{\int\limits_{\Omega}|f_{m}(Q)-f_{m}(Q+\mathbf{e}t)|^{2}dQ\right\}^{\frac{1}{2}}=
 $$
 $$
 =I_{1}+I_{2}+I_{3}<\varepsilon,
 $$
where $f\in L_{2}(\Omega),\,\left\{f_{n}\right\}_{1}^{\infty}\subset C_{0}^{\infty}(\Omega);$  $m$ is chosen so that $I_{1},I_{2}< \varepsilon/3 $ and $t$
is chosen so that $I_{3}< \varepsilon/3.$
Thus,  there exists such a positive  number $t_{0}$  that
$$
\|T_{t}f-f\|_{L_{2} }<\varepsilon,\,t<t_{0},
$$
for arbitrary small $\varepsilon>0.$  Using the assumption that  all functions have the zero extension outside $\Omega,$   we have
$\|T_{t}\|  \leq 1.$ Hence  we conclude that $T_{t}$ is a $C_{0}$ semigroup of contractions (see \cite{Pasy}).
\end{proof}

\begin{lem}\label{L6}
Suppose  $\rho\in \mathrm{Lip} \lambda,\,\lambda>\alpha,\,0<\alpha<1;$ then
$$
\rho\cdot\mathfrak{I}^{\alpha}_{0+}(L_{2})= \mathfrak{I}^{\alpha}_{d-}(L_{2});\;\rho\cdot\mathfrak{I}^{\alpha}_{d-}(L_{2})= \mathfrak{I}^{\alpha}_{d-}(L_{2}).
$$
 \end{lem}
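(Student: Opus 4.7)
The plan is to prove both identities by a fractional product/commutator rule combined with the inversion formula $\mathfrak{I}^{\alpha}_{\star}\mathfrak{D}^{\alpha}_{\star}=I$ on the image of $\mathfrak{I}^{\alpha}_{\star}$ (with $\star\in\{0+,d-\}$). I would first establish the forward inclusion $\rho\cdot\mathfrak{I}^{\alpha}_{\star}(L_{2})\subset \mathfrak{I}^{\alpha}_{\star}(L_{2})$ by an explicit decomposition, and then obtain the reverse inclusion (where applicable) by applying the truncated derivative to $\rho f$ and showing the limit lies in $L_{2}$.

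The key manipulation is an additive splitting of $\rho$ inside the kernel. For the $d-$ case this reads
\begin{equation*}
\rho(Q)(\mathfrak{I}^{\alpha}_{d-}g)(Q)=(\mathfrak{I}^{\alpha}_{d-}(\rho g))(Q)+\frac{1}{\Gamma(\alpha)}\int\limits_{r}^{d}\frac{[\rho(Q)-\rho(P+t\mathbf{e})]\,g(P+t\mathbf{e})}{(t-r)^{1-\alpha}}\,dt,
\end{equation*}
and a structurally identical formula (with the weighted factor $(t/r)^{n-1}$) handles the $0+$ case. The first summand belongs to $\mathfrak{I}^{\alpha}_{d-}(L_{2})$ because $\rho$ is bounded, so $\rho g\in L_{2}(\Omega)$. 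For the second summand the hypothesis $\rho\in\mathrm{Lip}\,\lambda$ with $\lambda>\alpha$ is decisive: the Hölder bound $|\rho(Q)-\rho(P+t\mathbf{e})|\leq C|t-r|^{\lambda}$ improves the kernel singularity from order $\alpha$ to order $\alpha+\lambda$, so the correction is essentially $\mathfrak{I}^{\alpha+\lambda}_{d-}|g|$, which factors (via the semigroup law of Riemann--Liouville integrals) as $\mathfrak{I}^{\alpha}_{d-}\circ\mathfrak{I}^{\lambda}_{d-}$ with $\mathfrak{I}^{\lambda}_{d-}\colon L_{2}\to L_{2}$ bounded on the bounded domain $\Omega$ by a Hardy-type estimate.

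For the reverse inclusion (genuine equality), I would take $f=\mathfrak{I}^{\alpha}_{\star}g$, apply the truncated fractional derivative $\mathfrak{D}^{\alpha}_{\star,\varepsilon}$ to $\rho f$, and split its numerator exactly as above so as to isolate the term $\rho(Q)\mathfrak{D}^{\alpha}_{\star,\varepsilon}f(Q)$ plus a commutator governed by the Lipschitz modulus. The condition $\lambda>\alpha$ guarantees that the commutator integrand is absolutely integrable uniformly in $\varepsilon$, so passage to the limit $\varepsilon\to 0$ in $L_{2}$ yields $\mathfrak{D}^{\alpha}_{\star}(\rho f)\in L_{2}(\Omega)$; the inversion formula then produces $\rho f=\mathfrak{I}^{\alpha}_{\star}(\mathfrak{D}^{\alpha}_{\star}(\rho f))\in\mathfrak{I}^{\alpha}_{\star}(L_{2})$.

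The main technical obstacle, and the only point where the two cases truly diverge, is the weighted factor $(t/r)^{n-1}$ in $\mathfrak{I}^{\alpha}_{0+}$, which breaks the clean semigroup law enjoyed by $\mathfrak{I}^{\alpha}_{d-}$. To handle it I would reduce to one dimension along rays via the radial decomposition \eqref{1a}, where $r^{n-1}(\mathfrak{I}^{\alpha}_{0+}g)(Q)$ is a classical left-sided fractional integral of the function $t\mapsto t^{n-1}g(P+t\mathbf{e})$, apply the one-dimensional commutator/composition identities together with the smoothness of $\partial\Omega$ guaranteeing $d(\mathbf{e})$ is bounded away from zero, and re-assemble the global $L_{2}(\Omega)$ estimate by integrating over the solid angle $\omega$. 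The slack $\lambda-\alpha>0$ provides exactly the margin needed to absorb the weight.
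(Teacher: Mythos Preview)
Your truncated-derivative argument (third paragraph) is essentially the paper's proof: split $\mathfrak{D}^{\alpha}_{\star,\varepsilon}(\rho f)$ as $\rho(Q)\,\mathfrak{D}^{\alpha}_{\star,\varepsilon}f$ plus a Lipschitz-controlled remainder, show the sequence is Cauchy in $L_{2}$ (the paper treats the thin region $\omega\times\{0<r<\varepsilon_{n}\}$ separately from its complement), and invoke the Marchaud-type characterization of $\mathfrak{I}^{\alpha}_{\star}(L_{2})$ from \cite{firstab_lit:1kukushkin2018}. There are, however, two gaps elsewhere in your sketch.

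First, the integral-level argument of your second paragraph does not close. The commutator identity is correct, but the remainder is only \emph{pointwise dominated} by $C\,\mathfrak{I}^{\alpha+\lambda}_{d-}|g|$; it is not itself a fractional integral of order $\alpha+\lambda$, because the factor $\rho(Q)-\rho(P+t\mathbf{e})$ depends on both endpoints and cannot be absorbed into the integrand as a function of $t$ alone. Pointwise domination by an element of $\mathfrak{I}^{\alpha}_{d-}(L_{2})$ does not imply membership in that range (it is a smoothness class, not a lattice), so the semigroup factorization $\mathfrak{I}^{\alpha}_{d-}\circ\mathfrak{I}^{\lambda}_{d-}$ is unavailable here. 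One is forced back to the derivative characterization --- exactly the paper's route.

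Second, what you label ``reverse inclusion'' is not one: starting from $f=\mathfrak{I}^{\alpha}_{\star}g$ and proving $\mathfrak{D}^{\alpha}_{\star}(\rho f)\in L_{2}$ again yields $\rho f\in\mathfrak{I}^{\alpha}_{\star}(L_{2})$, i.e.\ the \emph{forward} inclusion $\rho\cdot\mathfrak{I}^{\alpha}_{\star}(L_{2})\subset\mathfrak{I}^{\alpha}_{\star}(L_{2})$ a second time. The genuine reverse inclusion would require $h/\rho\in\mathfrak{I}^{\alpha}_{\star}(L_{2})$ for every $h$ in the range, which fails without a positive lower bound on $|\rho|$. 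The paper's own proof likewise establishes only $\subset$; the stated equality is an overstatement under the given hypotheses, and only the inclusion is ever used downstream (in Theorem~\ref{T3}, to place $\rho\,\mathfrak{I}^{\sigma}_{d-}g$ in the domain of $\mathfrak{D}^{\alpha}_{0+}$).
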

 \begin{proof}
  Consider an operator
  \begin{equation}\label{7}
(\psi^{+}_{  \varepsilon }f)(Q)=  \left\{ \begin{aligned}
 \int\limits_{0}^{r-\varepsilon }\frac{ f (Q)r^{n-1}- f(T)t^{n-1}}{(  r-t)^{\alpha +1}r^{n-1}}  dt,\;\varepsilon\leq r\leq d  ,\\
   \frac{ f(Q)}{\alpha} \left(\frac{1}{\varepsilon^{\alpha}}-\frac{1}{ r ^{\alpha} }    \right),\;\;\;\;\;\;\;\;\;\;\;\;\;\;\; 0\leq r <\varepsilon,\\
\end{aligned}
 \right.
\end{equation}
where $T=P+\mathbf{e}t.$
We should prove that there exists a limit
$$
\psi^{+}_{  \varepsilon }\rho f\stackrel{L_{2}}{\longrightarrow} \psi   f,\,f\in \mathfrak{I}^{\alpha}_{0+}(L_{2}),
$$
where $\psi f$ is some function corresponding to $f.$ We have
$$
(\psi^{+}_{  \varepsilon }\rho f)(Q)=\int\limits_{0}^{r-\varepsilon }\frac{ \rho(Q)f (Q)r^{n-1}- \rho (T)f(T)t^{n-1}}{(  r-t)^{\alpha +1}r^{n-1}}  dt=
\rho(Q)\int\limits_{0}^{r-\varepsilon }\frac{ f (Q)r^{n-1}-  f(T)t^{n-1}}{(  r-t)^{\alpha +1}r^{n-1}}  dt+
$$
$$
+\int\limits_{0}^{r-\varepsilon }\frac{   f(T)[\rho(Q)-\rho(T)]}{(  r-t)^{\alpha +1}} \left(\frac{t}{r }\right)^{n-1}  \!\!\!dt=A_{\varepsilon}(Q)+B_{\varepsilon}(Q)
 ,\;\varepsilon\leq r\leq d;
$$
$$
(\psi^{+}_{  \varepsilon }\rho f)(Q)= \rho(Q)f(Q)  \frac{1}{\alpha}\left(\frac{1}{\varepsilon^{\alpha}}-\frac{1}{ r ^{\alpha} }    \right),\;  0\leq r <\varepsilon.
$$
Hence, we get
$$
\|\psi^{+}_{  \varepsilon_{n+1} }\rho f-\psi^{+}_{  \varepsilon_{n} }\rho f\|_{L_{2}(\Omega)}\leq \|\psi^{+}_{  \varepsilon_{n+1} }\rho f-\psi^{+}_{  \varepsilon_{n} }\rho f\|_{L_{2}(\Omega')}
 +\|\psi^{+}_{  \varepsilon_{n+1} }\rho f-\psi^{+}_{  \varepsilon_{n} }\rho f\|_{L_{2}(\Omega_{n})},
$$
where $\{\varepsilon_{n}\}_{1}^{\infty}\subset \mathbb{R}_{+}$ is a strictly decreasing sequence that is chosen in an arbitrary way, $\Omega_{n}:=  \omega\times \{0<r<\varepsilon_{n}\}, $
 $\Omega':=  \Omega\setminus \Omega_{n} .$
It is clear that
$$
\|A_{\varepsilon_{n+1}}-A_{\varepsilon_{n}}\|_{L_{2}(\Omega')}\leq \|\rho\|_{L_{\infty}(\Omega)}\|\psi^{+}_{  \varepsilon_{n+1} }f-\psi^{+}_{  \varepsilon_{n} }f\|_{L_{2}(\Omega')},
$$
Since in accordance with Theorem 2.3 \cite{firstab_lit:1kukushkin2018}  the sequence $ \psi^{+}_{  \varepsilon_{n} }f,\,(n=1,2,...) $ is fundamental for the defined function $f,$ with respect to the $L_{2}(\Omega)$ norm,    then the sequence  $ A_{\varepsilon_{n}} $ is also fundamental with respect to the $L_{2}(\Omega')$ norm.
 Having used the H\"{o}lder properties of $\rho,$   we have
$$
\|B_{\varepsilon_{n+1}}-B_{\varepsilon_{n}}\|_{L_{2}(\Omega')}\leq M \left\{\int\limits_{\Omega'}\left(\int\limits_{r-\varepsilon_{n}}^{r-\varepsilon_{n+1}}\frac{   |f(T)| }{(  r-t)^{\alpha +1-\lambda}} \left(\frac{t}{r }\right)^{n-1}  \!\!\!dt\right)^{2}dQ\right\}^{\frac{1}{2}}.
$$
Note that applying Theorem 2.3 \cite{firstab_lit:1kukushkin2018}, we have
$$
\left\{\int\limits_{\Omega}\left(\int\limits_{0}^{r}\frac{   |f(T)| }{(  r-t)^{\alpha +1-\lambda}} \left(\frac{t}{r }\right)^{n-1}  \!\!\!dt\right)^{2}dQ\right\}^{\frac{1}{2}}\leq C \|f\|_{L_{2}}.
$$
Hence the sequence $ \left\{B_{\varepsilon_{n}}\right\}_{1}^{\infty} $ is fundamental with respect to the $L_{2}(\Omega')$ norm.
Therefore
$$
\|\psi^{+}_{  \varepsilon_{n+1} }\rho f-\psi^{+}_{  \varepsilon_{n} }\rho f\|_{L_{2}(\Omega')}\rightarrow 0,\,n\rightarrow \infty.
$$
Consider
$$
\|\psi^{+}_{  \varepsilon_{n+1} }\rho f-\psi^{+}_{  \varepsilon_{n} }\rho f\|_{L_{2}(\Omega_{n})}\leq
\|\psi^{+}_{  \varepsilon_{n+1} }\rho f-\psi^{+}_{  \varepsilon_{n} }\rho f\|_{L_{2}(\Omega_{n+1})}+
$$
$$
+\left\{\int\limits_{\omega}d\chi  \int\limits_{\varepsilon_{n+1}}^{\varepsilon_{n}}
|A_{\varepsilon_{n+1}}(Q)+B_{\varepsilon_{n+1}}(Q)|^{2}rdr\right\}^{\frac{1}{2}}+
$$
$$
 +\frac{1}{\alpha}\left\{\int\limits_{\omega}d\chi  \int\limits_{\varepsilon_{n+1}}^{\varepsilon_{n}}
\left|\rho(Q)f(Q)  \left(\frac{1}{\varepsilon_{n}^{\alpha}}-\frac{1}{ r ^{\alpha} }    \right)\right|^{2}rdr\right\}^{\frac{1}{2}}=
I_{1}+I_{2}+I_{3}.
$$
We have
$$
I_{1}\leq \frac{1}{\alpha}\left(\frac{1}{\varepsilon_{n}^{\alpha}}-\frac{1}{\varepsilon_{n+1}^{\alpha}   }    \right)\|\rho\|_{L_{\infty}} \int\limits_{\omega}d\chi\int\limits_{0}^{\varepsilon_{n+1}}
   f(Q)r dr\leq
$$
$$
\leq\frac{1}{\alpha}\left(\frac{1}{\varepsilon_{n}^{\alpha}}-\frac{1}{\varepsilon_{n+1}^{\alpha}   }    \right)\|\rho\|_{L_{\infty}}  \int\limits_{\omega}\left\{\int\limits_{0}^{\varepsilon_{n+1}}
  |f(Q)|^{2}r dr \right\}^{\frac{1}{2}}\left\{\int\limits_{0}^{\varepsilon_{n+1}}
   r dr \right\}^{\frac{1}{2}}d\chi\leq
$$
$$
\leq\frac{1}{\sqrt{2}\alpha}\left(\frac{1}{\varepsilon_{n}^{\alpha}}-\frac{1}{\varepsilon_{n+1}^{\alpha}   }    \right) \varepsilon_{n+1}\|\rho\|_{L_{\infty}} \|f\|_{L_{2}}.
$$
Hence $I_{1}\rightarrow 0,\,n\rightarrow \infty.$
Using the  estimates used above, it is not hard to prove that $I_{2},I_{3}\rightarrow 0,\,n\rightarrow \infty.$ The proof is left to a reader.
Therefore
$$
\|\psi^{+}_{  \varepsilon_{n+1} }\rho f-\psi^{+}_{  \varepsilon_{n} }\rho f\|_{L_{2}(\Omega_{n})}\rightarrow 0,\,n\rightarrow \infty.
$$
Combining the obtained results, we have
$$
\|\psi^{+}_{  \varepsilon_{n+1} }\rho f-\psi^{+}_{  \varepsilon_{n} }\rho f\|_{L_{2}(\Omega)}\rightarrow 0,\,n\rightarrow \infty.
$$
Using  Theorem 2.2 \cite{firstab_lit:1kukushkin2018}, we obtain the desired result for the case corresponding  to the class $\mathfrak{I}^{\alpha}_{0+}(L_{2}).$ The proof corresponding to the class $\mathfrak{I}^{\alpha}_{d-}(L_{2})$ is absolutely analogous.
\end{proof}

The following theorem is formulated in terms of the infinitesimal  generator $-A$ of the semigroup $T_{t}.$

\begin{teo}\label{T3} We claim that  $L=Z^{\alpha}_{G,F}(A).$ Moreover  if  $ \gamma_{a} $ is sufficiently large in comparison  with $\|\rho\|_{L_{\infty}},$ then $L$ satisfies conditions H1-H2, where we put $\mathfrak{M}:=C_{0}^{\infty}(\Omega),$   if we additionally assume that $\rho \in \mathrm{Lip}\lambda,\,   \lambda>\alpha  ,$ then    $ \tilde{\mathcal{H}}=H.$
\end{teo}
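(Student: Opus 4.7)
The plan is to split the argument along the three assertions of the theorem: identifying $L$ with a transform $Z^{\alpha}_{G,F}(A)$, deducing conditions H1--H2 from Theorem \ref{T1}, and upgrading to $\tilde{\mathcal{H}}=H$ under the Lipschitz assumption. Everything is organized around the core $\mathfrak{M}=C_{0}^{\infty}(\Omega)$, where explicit computation is available.

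First I would pin down $A$ and its fractional power. By Lemma \ref{L4} the shift semigroup $T_{t}$ is a $C_{0}$-semigroup of contractions, so by the Hille--Yosida theorem $A$ is m-accretive; on $\mathfrak{M}$ one verifies directly that $Af(Q)=-\partial f/\partial r$, where $r=|Q-P|$. To compute the fractional power I apply formula \eqref{9} to $A^{-\alpha}$, substitute the resolvent identity $(\lambda+A)^{-1}=\int_{0}^{\infty}e^{-\lambda t}T_{t}\,dt$, and exchange the order of integration using Fubini to obtain
$$
A^{-\alpha}f=\frac{1}{\Gamma(\alpha)}\int\limits_{0}^{\infty}t^{\alpha-1}T_{t}f\,dt.
$$
Because the functions are extended by zero outside $\bar{\Omega}$, the integrand vanishes for $t>d(\mathbf{e})-r$; the substitution $s=r+t$ then identifies $A^{-\alpha}=\mathfrak{I}^{\alpha}_{d-}$ on $L_{2}(\Omega)$, and by inversion $A^{\alpha}=\mathfrak{D}^{\alpha}_{d-}$ on $\mathfrak{M}$.

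Next I would construct $G$ and $F$ and appeal to Theorem \ref{T1}. Take $G\in\mathcal{B}(\mathfrak{H})$ as the realization of the bilinear form $(GAu,Av)_{\mathfrak{H}}=\int_{\Omega}a^{ij}D_{i}u\,\overline{D_{j}v}\,dQ$ on $\mathrm{R}(A|_{\mathfrak{M}})$, extended by density; boundedness follows from $a^{ij}\in C^{2}(\bar{\Omega})$, and strict accretivity with $\gamma_{G}$ proportional to $\gamma_{a}$ from the uniform ellipticity in \eqref{16}. Take $F:=\mathfrak{I}^{\sigma}_{0+}\rho\in\mathcal{B}(\mathfrak{H})$, bounded via the directional Hardy-type estimate of Theorem 2.3 in \cite{firstab_lit:1kukushkin2018}, with $\|F\|\leq C\|\rho\|_{L_{\infty}}$. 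A standard integration by parts on $\mathfrak{M}$ yields $A^{\ast}GAf=-\mathcal{T}f$; combined with the identification of $A^{\alpha}$ this gives $Z^{\alpha}_{G,F}(A)f=Lf$ on $\mathfrak{M}$. To apply Theorem \ref{T1} I check the remaining hypotheses: $A^{-1}=\mathfrak{I}^{1}_{d-}$ is compact on $L_{2}(\Omega)$ by a Rellich-type argument on the bounded convex domain, and the smallness condition $\gamma_{G}>C_{\alpha}\|A^{-1}\|\cdot\|F\|$ is precisely the quantitative meaning of ``$\gamma_{a}$ sufficiently large compared with $\|\rho\|_{L_{\infty}}$''. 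Theorem \ref{T1} then delivers H1--H2.

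For the last claim I would use the route outlined in Remark \ref{R1}: it suffices to verify $\mathrm{D}_{0}(\mathfrak{h})\subset\mathrm{D}(h)$, where $h$ is the form of $\mathcal{H}=\mathfrak{Re}\,W$ on $\mathfrak{M}$. The symmetric part of $L$ coming from $A^{\ast}GA$ matches the classical Dirichlet form and causes no trouble; the obstruction lives in the antisymmetric component of $FA^{\alpha}=\mathfrak{I}^{\sigma}_{0+}\rho\,\mathfrak{D}^{\alpha}_{d-}$. Under $\rho\in\mathrm{Lip}\lambda$ with $\lambda>\alpha$, Lemma \ref{L6} supplies $\rho\cdot\mathfrak{I}^{\alpha}_{0+}(L_{2})=\mathfrak{I}^{\alpha}_{d-}(L_{2})$, which rewrites $FA^{\alpha}$ modulo a bounded operator as a right-sided composition whose adjoint admits the same explicit form. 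From this the imaginary component of $L$ extends continuously from $\mathfrak{M}$ to $\mathrm{D}_{0}(\mathfrak{h})$, forcing the inclusion of forms $h\subset\mathfrak{h}$ on the core and hence $\tilde{\mathcal{H}}=H$. The main obstacle is the identification $A^{\ast}GA=-\mathcal{T}$ in dimension $n\geq 2$, since the generator $A$ is purely radial while $\mathcal{T}$ is the full divergence-form operator; the scalar $G$ realizing the anisotropic quadratic form $\int a^{ij}D_{i}u\,\overline{D_{j}v}$ on $\mathrm{R}(A)$ must therefore be built via a delicate Lax--Milgram construction tied to the $P$-polar parameterization of $\Omega$.
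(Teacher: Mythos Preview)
Your plan to derive conditions H1--H2 by invoking Theorem \ref{T1} is the weak point, and for $n\geq 2$ it does not go through. The generator $A$ controls only the radial derivative $Af(Q)=-(\nabla f,\mathbf{e})_{\mathbb{E}^{n}}$, so the graph norm $\|Af\|_{L_{2}}$ gives no angular regularity; your ``Rellich-type'' claim that $A^{-1}$ is compact is therefore unjustified (and is exactly why Corollary \ref{C2} is stated only in the one-dimensional case). The same obstruction hits your operator $G$: you want a bounded $G$ on $\mathrm{R}(A)$ satisfying $(GAu,Av)=\int_{\Omega}a^{ij}D_{i}u\,\overline{D_{j}v}\,dQ$, but this would force $\|\nabla u\|_{L_{2}}\leq C\|Au\|_{L_{2}}$, which is false when $n\geq 2$. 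So the hypotheses of Theorem \ref{T1} (bounded $G$, compact $J^{-1}$) are simply not available here, and you do acknowledge the difficulty at the end without resolving it.

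The paper bypasses this entirely: it takes $G:=B\mathcal{T}B$ with $B$ the explicit radial primitive, and obtains the representation $-\mathcal{T}=A^{\ast}GA$ as an operator identity by showing $-\mathcal{T}\subset A^{\ast}GA$ via integration by parts and then using that $-\mathcal{T}$ is m-accretive (no boundedness of $G$ is needed for this). For H1--H2 it does \emph{not} appeal to Theorem \ref{T1} at all; instead it chooses $\mathfrak{H}_{+}=H^{1}_{0}(\Omega)$ (not $\mathfrak{H}_{A}$), gets compact embedding from Rellich--Kondrashov, and verifies the two inequalities in H2 directly from ellipticity \eqref{16} and the estimate $\|\mathfrak{D}^{\alpha}_{d-}f\|_{L_{2}}\leq C\|f\|_{H^{1}_{0}}$. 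For the final assertion, the paper's route is also more concrete than your form argument: under $\rho\in\mathrm{Lip}\,\lambda$, Lemma \ref{L6} is used to show that $(\mathfrak{I}^{\sigma}_{0+}\rho\,\mathfrak{D}^{\alpha}_{d-})^{\ast}$ is defined on $C^{\infty}_{0}(\Omega)$, hence $\mathfrak{M}\subset\mathrm{D}(L^{\ast})$, and then Remark \ref{R1} together with H2 yields $\tilde{\mathcal{H}}=H$.
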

\begin{proof}
     By virtue of   Corollary 3.6 \cite[p.11]{Pasy}, we have
\begin{equation} \label{17}
\|(\lambda+A)^{-1}\|  \leq \frac{1}{\mathrm{Re} \lambda },\,\mathrm{Re}\lambda>0.
\end{equation}
Inequality \eqref{17} implies that $A$ is m-accretive.
Using formula \eqref{9},  we can define positive fractional powers $\alpha\in (0,1)$ of the operator $A. $
Applying  the Balakrishnan formula, we obtain
 \begin{equation}\label{18}
A^{\alpha}f:=\frac{\sin\alpha \pi}{\pi}\int\limits_{0}^{\infty}\lambda^{\alpha-1}(\lambda  +A)^{-1} A f\,d \lambda=\frac{1}{\Gamma(-\alpha)}\int\limits_{0}^{\infty}\frac{T_{t}-I}{t^{\alpha+1}}fdt,\,f\in \mathrm{D}(A).
\end{equation}
Hence, in the concrete  form of writing we have
 \begin{equation}\label{18.0.1}
A^{\alpha}f(Q)=\frac{1}{\Gamma(-\alpha)}\int\limits_{0}^{\infty}\frac{f(Q+\mathbf{e}t)-f(Q)}{t^{\alpha+1}}dt=
$$
$$
=\frac{\alpha}{\Gamma(1-\alpha)}\int\limits_{r}^{d   (\mathbf{e})  }\frac{f(Q)-f(P+\mathbf{e}t)}{(t-r)^{\alpha+1}}dt+ \frac{f (Q)}{\Gamma(1-\alpha)} \{d(\mathbf{e})-r\}^{-\alpha} =
\mathfrak{D}^{\alpha}_{d-}f(Q),\ f\in  \mathrm{D}  (A),
\end{equation}
where $d(\mathbf{e})$ is the distance from the point $P$ to the edge of $\Omega$ along the direction $\mathbf{e}.$ Note  that a relation  between positive  fractional powers of the operator $A$    and the  Riemann-Liouville  fractional derivative was demonstrated  in the one-dimensional case   in the paper    \cite{firstab_lit:1Ashyralyev}.

 Consider a restriction   $A_{0}\subset A,\,\mathrm{D}(A_{0})=C^{\infty}_{0}( \Omega )$ of the operator  $A.$ Note that,   since  the infinitesimal  generator  $-A$ is a closed operator (see \cite{Pasy}), then $A_{0}$  is closeable.
It is not hard to prove that $ \tilde{A}_{0}$ is an m-accretive operator. For this purpose, note that   since the operator $A$ is  m-accretive, then  by virtue of  \eqref{6}, we get
\begin{equation*} \mathrm{Re}(\tilde{A}_{0} f,f)_{ \mathfrak{H}}\geq0 ,\,f\in \mathrm{D}(\tilde{A}_{0}).
\end{equation*}
This gives us an opportunity to conclude that
$$
   \|f\|^{2}_{\mathfrak{H}}\leq  \frac{1}{ t^{2}} \left\{ \| \tilde{A}_{0} f\|^{2}_{ \mathfrak{H}}+2t \mathrm{Re}(\tilde{A}_{0}f,f)_{ \mathfrak{H}}+t^{2}\|   f \|^{2}_{ \mathfrak{H}}\right\}  ;\,\|f\|^{2}_{\mathfrak{H}}\leq  \frac{1}{t^{2}}  \|(\tilde{A}_{0}+t)f \|^{2}_{ \mathfrak{H}},\,t>0.
$$
Therefore
\begin{equation*}
\|(\tilde{A}_{0}+t)^{-1}\|_{\mathrm{R} \rightarrow \mathfrak{H}}\leq\frac{1}{ t},\,t>0,
\end{equation*}
where   $\mathrm{R}:=\mathrm{R}(\tilde{A}_{0}+t).$ Hence, in accordance with Lemma \ref{L1}, we obtain that the operator $\tilde{A}_{0}$ is m-accretive. Since there does not exist an accretive extension  of an m-accretive operator (see \cite[p.279]{firstab_lit:kato1980} ) and $\tilde{A}_{0}\subset A,$ then $\tilde{A}_{0}= A.$
It is easy to prove  that
\begin{equation}\label{26.2}
\|Af\|_{L_{2}}\leq C\|f\|_{H_{0}^{1}},\,f\in H_{0}^{1}(\Omega),
\end{equation}
 for this purpose we should establish  a representation $Af(Q) =-(\nabla f ,\mathbf{e})_{\mathbb{E}^{n}}, f\in C^{\infty}_{0}(\Omega)$  the rest of the proof   is  left to a reader.
Thus, we get   $H_{0}^{1}(\Omega) \subset \mathrm{D}(A),$ and as a result $A^{\alpha}f = \mathfrak{D}^{\alpha}_{d-}f ,\ f\in  H_{0}^{1}(\Omega) .$
Let us find a representation for the  operator $G.$
Consider an operator
 $$
 Bf(Q)=\!\int_{0}^{r}\!\!f(P+\mathbf{e}[r-t])dt,\,f\in L_{2}(\Omega).
 $$It is not hard to prove that  $B\in \mathcal{B}(L_{2}),$   applying the generalized Minkowski inequality, we get
 $$
 \|Bf\|_{L_{2} }\leq \int\limits_{0}^{\mathrm{diam\,\Omega}}dt  \left(\int\limits_{\Omega}|f(P+\mathbf{e}[r-t])|dQ\right)^{1/2}\leq C\|f\|_{L_{2} }.
 $$
    The fact $A^{-1}_{0}\subset B$   follows from properties of the  one-dimensional integral defined on smooth functions.
It is  a  well-known fact (see Theorem 2 \cite[p.555]{firstab_lit:Smirnov5}) that  since  $A_{0}$ is closeable and there exists a bounded operator $ A^{-1}_{0},$ then there exists a bounded operator $A^{-1}=\tilde{A}^{-1}_{0}=  \widetilde{A ^{-1}_{0}}  .$ Using  this relation we conclude that $A^{-1}\subset B.$ It is obvious that
\begin{equation}\label{24}
\int\limits_{\Omega }  A\left(B  \mathcal{T}  f \cdot g\right) dQ =\int\limits_{\Omega } AB\mathcal{T}f \cdot g\,  dQ +\int\limits_{\Omega }  B \mathcal{T} f \cdot Ag  \,dQ ,\, f\in C^{2}(\bar{\Omega}),g\in C^{\infty}_{0}( \Omega ).
\end{equation}
Using the   divergence  theorem, we get
\begin{equation}\label{25}
\int\limits_{\Omega }  A\left(B\mathcal{T}f \cdot g\right) \,dQ=\int\limits_{S}(\mathbf{e},\mathbf{n})_{\mathbb{E}^{n}}(B\mathcal{T}f\cdot  g)(\sigma)d\sigma,
\end{equation}
where $S$ is the surface of $\Omega.$
Taking into account   that $ g(S)=0$ and combining   \eqref{24},\eqref{25}, we get
\begin{equation}\label{26}
    -\int\limits_{\Omega }  AB\mathcal{T} f\cdot \bar{g} \, dQ=  \int\limits_{\Omega } B\mathcal{T} f\cdot   \overline{A  g} \, dQ,\, f\in C^{2}(\bar{\Omega}),g\in C^{\infty}_{0}( \Omega ).
\end{equation}
Suppose that $f\in H^{2}(\Omega),$ then    there exists a sequence $\{f_{n}\}_{1}^{\infty}\subset C^{2}(\bar{\Omega})$ such that
$ f_{n}\stackrel{ H^{2}}{\longrightarrow}  f$ (see \cite[p.346]{firstab_lit:Smirnov5}).  Using this fact, it is not hard to prove that
$\mathcal{T}f_{n}\stackrel{L_{2}}{\longrightarrow} \mathcal{T}f.$  Therefore  $AB\mathcal{T}f_{n}\stackrel{L_{2}}{\longrightarrow} \mathcal{T}f,$  since  $AB\mathcal{T}f_{n}=\mathcal{T}f_{n}.$ It is also clear that   $B\mathcal{T}f_{n}\stackrel{L_{2}}{\longrightarrow} B\mathcal{T}f,$ since $B$ is continuous.
Using these facts, we can extend relation \eqref{26} to the following
\begin{equation}\label{26.1}
  -\int\limits_{\Omega }  \mathcal{T} f \cdot  \bar{g} \, dQ= \int\limits_{\Omega } B\mathcal{T}f\,  \overline{Ag} \, dQ,\; f\in \mathrm{D}(L),\,g\in C_{0}^{\infty}(\Omega).
\end{equation}
It was previously proved that $H_{0}^{1}(\Omega) \subset \mathrm{D}(A),\, A^{-1} \subset B.$ Hence $G Af=B\mathcal{T} f,\, f\in  \mathrm{D}(L),$  where $G:=B\mathcal{T}B.$  Using  this fact    we can rewrite relation \eqref{26.1} in a   form
\begin{equation}\label{26.102}
  -\int\limits_{\Omega }  \mathcal{T} f \cdot  \bar{g} \, dQ= \int\limits_{\Omega } G Af\,  \overline{Ag} \, dQ,\; f\in \mathrm{D}(L),\,g\in C_{0}^{\infty}(\Omega).
\end{equation}
Note that   in accordance with the fact $A=\tilde{A}_{0},$ we have
$$
\forall g\in \mathrm{D}(A),\,\exists \{g_{n}\}_{1}^{\infty}\subset C^{\infty}_{0}( \Omega ),\,    g_{n}\xrightarrow[      A       ]{}g.
$$
Therefore, we can extend   relation \eqref{26.102} to the following
 \begin{equation}\label{27}
     -\int\limits_{\Omega }  \mathcal{T} f \cdot  \bar{g} \, dQ= \int\limits_{\Omega } G Af \, \overline{Ag} \, dQ,\; f\in \mathrm{D}(L),\,g\in \mathrm{D}(A).
\end{equation}
Relation \eqref{27} indicates that $G Af\in \mathrm{D}( A ^{\ast})$   and it is clear that $  -\mathcal{T}\subset A ^{\ast}GA.$ On the other hand in accordance with  Chapter VI, Theorem 1.2    \cite{firstab_lit: Berezansk1968}, we have that $-\mathcal{T}$ is a closed operator, hence in accordance with Lemma \ref{L1} the operator  $-\mathcal{T}$ is m-accretive. Therefore $-\mathcal{T}= A  ^{\ast}GA,$ since $A ^{\ast}GA$ is accretive. Note that  by virtue of  Theorem 2.1 \cite{firstab_lit:1kukushkin2018}, we have     $(\mathfrak{I}^{\sigma }_{0+}\rho\, \cdot)\in \mathcal{B}(L_{2}).$
    It was previously proved that $\mathfrak{D}^{\alpha}_{d-}f= A^{\alpha}f,\,f\in H^{1}_{0}(\Omega).$         Thus, the  representation  $L=Z^{\alpha}_{GF}(A),$     where $G:=B\mathcal{T}B,\,F:=(\mathfrak{I}^{\sigma }_{0+}\rho\,\cdot)$   has been established.

 Let us prove that the operator $L$ satisfy   conditions H1--H2.    Choose the space  $L_{2}(\Omega)$ as a space $\mathfrak{H},$ the set  $C_{0}^{\infty}(\Omega)$ as a linear  manifold $\mathfrak{M},$ and the space  $H^{1}_{0}(\Omega)$ as a space $\mathfrak{H}_{+}.$ By virtue of the  Rellich-Kondrashov theorem, we have  $H_{0}^{1}(\Omega)\subset\subset L_{2}(\Omega).$  Thus, condition  H1  is fulfilled.
Using   simple  reasonings, we come to the following inequality
 \begin{equation}\label{27.1.0}
\left|\int\limits_{\Omega }  \mathcal{T} f \cdot  \bar{g} \, dQ\right|\leq C\|f\|_{H_{0}^{1}}\|g\|_{H_{0}^{1}},\; f,g\in C_{0}^{\infty}(\Omega).
\end{equation}
     Let us prove that
\begin{equation}\label{32.1}
 |\left( \mathfrak{I}^{\sigma }_{0+}\rho\,\mathfrak{D}^{\alpha}_{d-}  f,g\right)_{ L _{2 }}| \leq K\|f\|_{H_{0}^{1}}\|g\|_{L_{2}},\,f,g\in C^{\infty}_{0}(\Omega),
\end{equation}
where $K=C\|\rho\|_{L_{\infty}}.$
Using  a fact that    the operator $(\mathfrak{I}^{\sigma }_{0+}\rho\,\cdot)$ is bounded, we obtain
\begin{equation}\label{32.15}
\|\mathfrak{I}^{\sigma }_{0+}\rho\,\mathfrak{D}^{\alpha}_{d-}  f\|_{L_{2}}\leq C \|\rho\|_{L_{\infty}} \| \mathfrak{D}^{\alpha}_{d-}  f\|_{L_{2}},\, f\in C_{0}^{\infty}(\Omega).  \end{equation}
Taking into account that $ A ^{-1} $ is bounded, $A$ is m-accretive, applying  Lemma \ref{L3.1} analogously to  \eqref{15.0.1}, we conclude that
$
\|A^{\alpha} f\|_{L_{2}}\leq C \|Af\|_{L_{2}},\,f\in \mathrm{D}(A).
$
Using \eqref{18.0.1},\eqref{26.2}, we get
$
\| \mathfrak{D}^{\alpha}_{d-}  f\|_{L_{2}}\leq C\|f\|_{H_{0}^{1}},\,f\in C_{0}^{\infty}(\Omega).
$
Combining this relation with \eqref{32.15}, we obtain
$$
\|\mathfrak{I}^{\sigma }_{0+}\rho\,\mathfrak{D}^{\alpha}_{d-}  f\|_{L_{2}}\leq K \|f\|_{H_{0}^{1}},\,f\in C_{0}^{\infty}(\Omega).
$$
Using this inequality, we can easily obtain \eqref{32.1}, from what follows that
$$
\mathrm{Re}(\mathfrak{I}^{\sigma }_{0+}\rho\,\mathfrak{D}^{\alpha}_{d-}  f,f)_{L_{2}}\geq -K\|f\|^{2}_{H^{1}_{0}},\, f \in C^{\infty}_{0}(\Omega).
$$
On the other hand, using a uniformly elliptic property of the operator $\mathcal{T}$ it is not hard to prove that
\begin{equation}\label{27.2}
-\mathrm{Re}(\mathcal{T} f,f)\geq \gamma_{a}\|f\|_{H_{0}^{1}},\,f\in C^{\infty}_{0}(\Omega),
\end{equation}
the proof of this fact  is obvious and left to a reader (see  \cite{firstab_lit:1kukushkin2018}). Now, if we assume that $\gamma_{a}>K,$ then we obtain the fulfillment of condition
H2.

  Assume additionally that $\rho \in \mathrm{Lip}\lambda,\,\lambda>\alpha,$ let us prove that  $C_{0}^{\infty}(\Omega)\subset \mathrm{D}(L^{\ast}).$
Note that
 $$
 \int\limits_{\Omega}D_{j} ( a^{ij} D_{i}f)\,gdQ=\int\limits_{\Omega}f\,\overline{ D_{j} ( a^{ji} D_{i}g)}dQ,\,f\in \mathrm{D}(L),\, g\in C_{0}^{\infty}(\Omega).
 $$
Using this equality, we conclude that $(-\mathcal{T})^{\ast}$ is defined on $C_{0}^{\infty}(\Omega).$ Applying   the Fubini theorem,   Lemma \ref{L6}, Lemma 2.6 \cite{firstab_lit:1kukushkin2018},     we get
\begin{equation*}
 \left( \mathfrak{I}^{\sigma}_{0+}\rho\,\mathfrak{D}^{\alpha}_{d-}  f,g\right)_{L_{2}}= \left( \mathfrak{D}^{\alpha}_{d-}  f,\rho\,\mathfrak{I}^{\sigma}_{d-}g\right)_{L_{2}}= \left(   f,\mathfrak{D}^{\alpha}_{0+}\rho\,\mathfrak{I}^{\sigma}_{d-}g\right)_{L_{2}}\!\!, \,f\in \mathrm{D}(L),\, g\in C_{0}^{\infty}(\Omega).
\end{equation*}
Therefore the operator
$
\left(\mathfrak{I}^{\sigma}_{0+}\rho\,\mathfrak{D}^{\alpha}_{d-}\right)^{\ast}
$
is defined on $C_{0}^{\infty}(\Omega).$
Taking into account the above reasonings, we conclude that    $ C_{0}^{\infty}(\Omega) \subset \mathrm{D}( L^{\ast} ).$ Combining  this fact with relation H2, we obtain     $\tilde{\mathcal{H}}=H$ (see Remark \ref{R1}).
\end{proof}

\begin{corol}\label{C2}
 Consider  a one-dimensional case,     we claim that  $L\in \mathfrak{G_{\alpha}}.$
\end{corol}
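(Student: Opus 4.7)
The plan is to apply Theorem \ref{T1} directly by exhibiting a valid triple $(J,G,F)$. I would take $J=A$ (the shift semigroup generator from Theorem \ref{T3}), $F=(\mathfrak{I}^{\sigma}_{0+}\rho\,\cdot)$, and---simplifying the identification $G=B\mathcal{T}B$ used in the $n$-dimensional setting of Theorem \ref{T3}---choose $G=M_{a}$, the operator of multiplication by the coefficient $a$. The justification for this simplification is that in one dimension $B\mathcal{T}Bu = a u - a(0)u(0)$, and the boundary correction $a(0)u(0)$ vanishes on the dense set $\{u=Af:f\in C_{0}^{\infty}(\Omega)\}$, so $B\mathcal{T}B$ and $M_{a}$ coincide on a core.

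To verify the representation $L=Z^{\alpha}_{G,F}(A)$, I would compute on $C_{0}^{\infty}(\Omega)$: for $f\in C_{0}^{\infty}(\Omega)$ the function $M_{a}Af = -af'$ vanishes near the endpoints, hence lies in $\mathrm{D}(A^{*})$, and $A^{*}(-af') = (-af')' = -\mathcal{T}f$. Thus $Z^{\alpha}_{M_{a},F}(A)f = -\mathcal{T}f + FA^{\alpha}f = Lf$ on this core, which suffices since $L$ is closed. The remaining hypotheses of Theorem \ref{T1} are then routine: m-accretivity of $A$ and $F\in\mathcal{B}(L_{2})$ come from Theorem \ref{T3}; $M_{a}$ is everywhere defined, bounded by $\|a\|_{L_{\infty}}$, and strictly accretive with $\gamma_{G}\geq\gamma_{a}$ by \eqref{16}; and the inequality $\gamma_{G}>C_{\alpha}\|A^{-1}\|\cdot\|F\|$ is covered by the ``$\gamma_{a}$ sufficiently large'' hypothesis inherited from Theorem \ref{T3}.

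The only genuinely new ingredient, and what I expect to be the heart of the matter, is the compactness of $A^{-1}$. In the one-dimensional case $A$ acts as $-d/dx$ on $\{f\in H^{1}(\Omega):f(d)=0\}$, so $A^{-1}g(x)=\int_{x}^{d}g(t)\,dt$---a Volterra integral operator with kernel $\chi_{\{t\geq x\}}\in L_{2}((0,d)^{2})$. Hence $A^{-1}$ is Hilbert--Schmidt, in particular compact. This property fails in higher dimensions, where $A^{-1}$ reduces to radial integration with no smoothing in the angular variables; that failure is precisely why the $n$-dimensional Theorem \ref{T3} only guarantees $\mathrm{H1}$--$\mathrm{H2}$ without asserting membership in $\mathfrak{G_{\alpha}}$.
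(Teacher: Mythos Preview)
Your proposal is correct and follows essentially the same route as the paper: verify the hypotheses of Theorem~\ref{T1} for the triple already supplied by Theorem~\ref{T3}, the new ingredients being compactness of $A^{-1}$ and boundedness of $G$. For compactness the paper establishes $\|Af\|_{L_2}=\|f\|_{H_0^1}$, identifies $\mathrm{D}(A)=H_0^1(\Omega)$, and invokes Rellich--Kondrashov, whereas you observe directly that $A^{-1}$ is a Volterra operator with square-integrable kernel and hence Hilbert--Schmidt; this is a slightly more self-contained variant (note, though, that the paper takes $\mathrm{D}(A)=H_0^1(\Omega)$ rather than the one-sided domain $\{f\in H^1:f(d)=0\}$ you wrote---either description yields a Hilbert--Schmidt inverse). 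For $G$ the paper keeps $G=B\mathcal{T}B$ and computes $GA_0f=a^{11}A_0f$ on $C_0^\infty(\Omega)$, which is precisely your observation that $G$ agrees with $M_a$ on a core of $A$; your decision to take $G=M_a$ outright is therefore a cosmetic simplification, and since the representation $L=Z^\alpha_{G,F}(A)$ is already furnished by Theorem~\ref{T3} there is no need to re-derive it on $C_0^\infty$ (your appeal to ``$L$ is closed'' would otherwise require a word about matching domains).
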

\begin{proof}
It is not hard to prove   that $ \| A_{0}f \|_{L_{2}}=\|f\|_{H_{0}^{1}},\,f\in C_{0}^{\infty}(\Omega).$ This relation can be extended to the following
\begin{equation}\label{37}
 \| A f \|_{L_{2}}=\|f\|_{H_{0}^{1}},\,f\in H_{0}^{1}(\Omega),
\end{equation}
whence $\mathrm{D}(A)=H_{0}^{1}(\Omega).$
 Taking into account  the Rellich-Kondrashov theorem, we conclude that  $ A ^{-1}$ is compact. Thus, to show that conditions of Theorem \ref{T1} are fulfilled  we need  prove  that  the  operator $G:=B\mathcal{T}B$ is bounded and $\mathrm{R}( A )\subset \mathrm{D}(G).$ We can establish  the following relation  by direct calculations
$
GA_{0}f =B\mathcal{T}f =a^{11} A_{0}f ,\,f\in C_{0}^{\infty}(\Omega),
$
where $a^{11}=a^{ij},\;i,j=1.$ Using this equality, we can easily prove that
$
\|G  A f\|_{L_{2}}\leq C\| A f\|_{L_{2}},\, f\in \mathrm{D}( A ).
$
Thus, we obtain the desired  result.
\end{proof}

\vspace{0.5cm}

\noindent{\bf  Riesz potential}\\

 Consider a   space $L_{2}(\Omega),\,\Omega:=(-\infty,\infty).$      We denote by $H^{2,\,\lambda}_{0}(\Omega)$ the completion of the set  $C^{\infty}_{0}(\Omega)$  with the norm
$$
\|f\|_{H^{2,\lambda}_{0}}=\left\{\|f\|^{2}_{L_{2}(\Omega) }+\|f''\|^{2}_{L_{2}(\Omega,\omega^{\lambda})} \right\}^{1/2},\,  \lambda\in \mathbb{R},
$$
where $\omega(x):=  (1+|x|).$
Let us notice the following fact  (see Theorem 1 \cite{firstab_lit:1Adams}), if $\lambda>4,$ then
$
H^{2,\,\lambda}_{0}(\Omega)\subset\subset L_{2}(\Omega).
$
Consider a Riesz potential
$$
I^{\alpha}f(x)=B_{\alpha}\int\limits_{-\infty}^{\infty}f (s)|s-x|^{\alpha-1} ds,\,B_{\alpha}=\frac{1}{2\Gamma(\alpha)  \cos  \alpha \pi / 2   },\,\alpha\in (0,1),
$$
where $f$ is in $L_{p}(-\infty,\infty),\,1\leq p<1/\alpha.$
It is  obvious that
$
I^{\alpha}f= B_{\alpha}\Gamma(\alpha) (I^{\alpha}_{+}f+I^{\alpha}_{-}f),
$
where
$$
I^{\alpha}_{\pm}f(x)=\frac{1}{\Gamma(\alpha)}\int\limits_{0}^{\infty}f (s\pm x) s ^{\alpha-1} ds,
$$
the last operators are known as fractional integrals on a whole  real axis   (see \cite[p.94]{firstab_lit:samko1987}). Assume that the following  condition holds
 $ \sigma/2 + 3/4<\alpha<1 ,$ where $\sigma$ is a non-negative  constant. Following the idea of the   monograph \cite[p.176]{firstab_lit:samko1987}
 consider a sum of a differential operator and  a composition of    fractional integro-differential operators
$$
 L    := \tilde{\mathcal{T}}   +I^{\sigma}_{+}\,\rho \,  I^{2(1-\alpha)}\frac{d^{2}}{dx^{2}}  \,,
 $$
where  $$
\mathcal{T} := \frac{d^{2}}{dx^{2}}\left(a  \frac{d^{2}}{dx^{2}}  \cdot \right) ,\, \mathrm{D}(\mathcal{T})=C^{\infty}_{0}(\Omega) ,
$$
$$
\, \rho(x)\in L_{\infty}(\Omega) ,\,a(x)\in L_{\infty}(\Omega)\cap C^{ 2 }( \Omega ),\, \mathrm{Re}\,a(x) >\gamma_{a}(1+|x|)^{5},\,\gamma_{a}>0.
$$
Consider a family of operators
$$
T_{t}f(x)=(2\pi t)^{-1/2}\int\limits_{-\infty}^{\infty}e^{-(x-\tau)^{2}/2t}f(\tau)d\tau,\;t>0,\;
 T_{t}f(x)=f(x),\;t=0,\,f\in L_{2}(\Omega).
$$
\begin{lem}\label{L6.1}
$T_{t}$ is a $C_{0}$ semigroup of contractions.
\end{lem}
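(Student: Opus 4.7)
The plan is to verify the three defining properties of a $C_{0}$ semigroup of contractions in turn: (i) $T_{0}=I$, which is built into the definition; (ii) the semigroup law $T_{s}T_{t}=T_{s+t}$; (iii) the contraction bound $\|T_{t}\|\leq 1$; and (iv) strong continuity at $t=0$, which then propagates to all $t\geq 0$ via (ii) and (iii).

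For the semigroup law, I would write $T_{s}T_{t}f(x)$ as an iterated integral, apply the Fubini theorem, and then evaluate the inner integral by completing the square. The point is the standard identity
$$
\int\limits_{-\infty}^{\infty}(2\pi s)^{-1/2}(2\pi t)^{-1/2}e^{-(x-\tau)^{2}/2s}e^{-(\tau-y)^{2}/2t}d\tau
=(2\pi(s+t))^{-1/2}e^{-(x-y)^{2}/2(s+t)},
$$
which reduces $T_{s}T_{t}$ to $T_{s+t}$. This is a routine Gaussian computation and should present no difficulty. For the contraction property I would appeal either to the Young convolution inequality (the kernel $p_{t}(x)=(2\pi t)^{-1/2}e^{-x^{2}/2t}$ has $\|p_{t}\|_{L_{1}}=1$, hence $\|T_{t}f\|_{L_{2}}\leq\|p_{t}\|_{L_{1}}\|f\|_{L_{2}}=\|f\|_{L_{2}}$), or equivalently to Plancherel applied to the Fourier multiplier $e^{-t\xi^{2}/2}$, whose modulus is bounded by $1$.

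The main obstacle is strong continuity at $t=0$, i.e. $\|T_{t}f-f\|_{L_{2}}\to 0$ as $t\to +0$ for every $f\in L_{2}(\Omega)$. My approach would be a standard $\varepsilon/3$ argument: first establish the claim on the dense subspace $C_{0}^{\infty}(\Omega)\subset L_{2}(\Omega)$, where one can write
$$
T_{t}\varphi(x)-\varphi(x)=\int\limits_{-\infty}^{\infty}p_{t}(x-\tau)[\varphi(\tau)-\varphi(x)]d\tau
$$
and use the fact that $p_{t}$ is an approximate identity together with the uniform continuity and compact support of $\varphi$ to obtain $\|T_{t}\varphi-\varphi\|_{L_{2}}\to 0$. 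For general $f\in L_{2}(\Omega)$, pick $\varphi\in C_{0}^{\infty}(\Omega)$ with $\|f-\varphi\|_{L_{2}}<\varepsilon/3$, and then estimate
$$
\|T_{t}f-f\|_{L_{2}}\leq\|T_{t}(f-\varphi)\|_{L_{2}}+\|T_{t}\varphi-\varphi\|_{L_{2}}+\|\varphi-f\|_{L_{2}},
$$
using the contraction bound on the first term. Once strong continuity at $0$ is in hand, the semigroup law and the uniform bound $\|T_{t}\|\leq 1$ yield strong continuity at every $t>0$, completing the verification that $T_{t}$ is a $C_{0}$ semigroup of contractions in the sense of \cite{Pasy}.
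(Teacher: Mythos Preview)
Your proposal is correct and follows essentially the same route as the paper: both establish the semigroup law via the Gaussian convolution identity, the contraction bound via an $L_{1}$--$L_{2}$ convolution estimate (the paper phrases this as the generalized Minkowski inequality, which is the same content as your appeal to Young's inequality), and strong continuity at $t=0$.

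The one methodological difference worth noting is in the strong continuity step. You argue by density: first handle $\varphi\in C_{0}^{\infty}(\Omega)$ via the approximate-identity property of $p_{t}$, then use the contraction bound and an $\varepsilon/3$ estimate to pass to general $f\in L_{2}(\Omega)$. The paper instead treats all $f\in L_{2}(\Omega)$ at once: after the substitution $z=(x-\tau)/\sqrt{t}$ and Minkowski's inequality one is reduced to
\[
\int_{-\infty}^{\infty}N_{1}(z)\left(\int_{-\infty}^{\infty}\bigl[f(x-\sqrt{t}\,z)-f(x)\bigr]^{2}dx\right)^{1/2}dz,
\]
and the Fatou--Lebesgue (dominated convergence) theorem, with dominating function $2N_{1}(z)\|f\|_{L_{2}}$, sends this to zero as $t\to 0$ by continuity of translation in $L_{2}$. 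The paper's argument is slightly more direct (no separate treatment of a dense subclass), while yours is the textbook $\varepsilon/3$ pattern; both are standard and equally valid.
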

\begin{proof}
Let us establish the semigroup property,  by definition  we have $T_{0}=I.$ Consider the following formula, note that the interchange of the integration order can be easily substantiated
$$
T_{t}T_{t'}f(x)=\frac{1}{\sqrt{2\pi t}\sqrt{2\pi t'}}
\int\limits_{-\infty}^{\infty} e^{-\frac{(x-u)^{2}}{2t}}du\int\limits_{-\infty}^{\infty} e^{-\frac{(u-\tau)^{2}}{2t}}f(\tau)d\tau=
$$
$$
 =\frac{1}{\sqrt{2\pi t}\sqrt{2\pi t'}}
\int\limits_{-\infty}^{\infty} f(\tau)d\tau \int\limits_{-\infty}^{\infty}e^{-\frac{(x-u)^{2}}{2t}} e^{-\frac{(u-\tau)^{2}}{2t}}  du=\frac{1}{\sqrt{2\pi t}\sqrt{2\pi t'}}
\int\limits_{-\infty}^{\infty} f(\tau)d\tau \int\limits_{-\infty}^{\infty}e^{-\frac{(x-v-\tau)^{2}}{2t}} e^{-\frac{  v ^{2}}{2t}}  dv.
$$
On the other hand, in accordance with the formula \cite[p.325]{firstab_lit:Yosida}, we have
$$
\frac{1}{\sqrt{2\pi (t+t')} }e^{-\frac{(x-\tau)^{2}}{2t}}=\frac{1}{\sqrt{2\pi t}\sqrt{2\pi t'}}\int\limits_{-\infty}^{\infty}e^{-\frac{(x-\tau-v)^{2}}{2t}} e^{-\frac{ v ^{2}}{2t}}  dv.
$$
Hence
$$
\frac{1}{\sqrt{2\pi (t+t')} }\int\limits_{-\infty}^{\infty}e^{-\frac{(x-\tau)^{2}}{2t}}f(\tau) d\tau=\frac{1}{\sqrt{2\pi t}\sqrt{2\pi t'}}
\int\limits_{-\infty}^{\infty} f(\tau)d\tau \int\limits_{-\infty}^{\infty}e^{-\frac{(x-v-\tau)^{2}}{2t}} e^{-\frac{  v ^{2}}{2t}}  dv,
$$
from what immediately  follows  the fact   $T_{t}T_{t'}f=T_{t+t'}f.$
Let us show that $T_{t}$ is a $C_{0}$ semigroup of contractions.
Observe that
$$
(2\pi t)^{-1/2}\int\limits_{-\infty}^{\infty}e^{- \tau ^{2}/2t} d\tau =1.
$$
Therefore,  using the generalized Minkowski inequality (see (1.33) \cite[p.9]{firstab_lit:samko1987}), we get
$$
\|T_{t}f\|_{L_{2}}=\left(\int\limits_{-\infty}^{\infty} \left|\int\limits_{-\infty}^{\infty}f(x+s) N_{t}(s)ds  \right|^{2}dx \right)^{1/2}  \leq
  $$
  $$
  \leq\int\limits_{-\infty}^{\infty} N_{t}(s)ds \left(\int\limits_{-\infty}^{\infty}\left|f(x+s)   \right|^{2}dx \right)^{1/2}=\|f\|_{L_{2}},\,f\in C_{0}^{\infty}(\Omega),
$$
where $N_{t}(x):=(2\pi t)^{-1/2}e^{-x^{2}/2t}.$
It is clear that the last inequality can be extended to $L_{2}(\Omega),$ since $C_{0}^{\infty}(\Omega)$ is dense in $L_{2}(\Omega).$
Thus, we conclude that $T_{t}$ is a   semigroup of contractions.

Let us establish a strongly continuous property.  Assuming that  $z=(x-\tau)/\sqrt{t},$  we get in an obvious way
$$
\|T_{t}f-f\|_{L_{2}}=\left(\int\limits_{-\infty}^{\infty}\left|\int\limits_{-\infty}^{\infty}N_{1}(z)\left[ f(x-\sqrt{t}z)-f( x)\right] dz \right|^{2} dx\right)^{1/2}\!\!\!\leq
$$
$$
\leq\int\limits_{-\infty}^{\infty}N_{1}(z)\left(\int\limits_{-\infty}^{\infty}\left[ f(x-\sqrt{t}z)-f(x)\right]^{2} dx \right)^{1/2} \!\!\!dz,\, f\in L_{2}(\Omega),
$$
where $N_{1}=N_{t}|_{t=1}.$
Observe that, for arbitrary fixed $t,z$ we have
$$
N_{1}(z)\left(\int\limits_{-\infty}^{\infty}\left[ f(x-\sqrt{t}z)-f(x)\right]^{2} dx \right)^{1/2}\!\!\!\leq
 $$
 $$
 \leq N_{1}(z)\left(\int\limits_{-\infty}^{\infty}\left[ f(x-\sqrt{t}z) \right]^{2} dx \right)^{1/2}+N_{1}(z)\|f\|_{L_{2}}\leq 2 N_{1}(z)\|f\|_{L_{2}}.
$$
Applying the Fatou--Lebesgue theorem, we get
$$
 \overline{\lim\limits_{ t\rightarrow 0}}\int\limits_{\infty}^{\infty}N_{1}(z)\left(\int\limits_{\infty}^{\infty}\left[ f(x-\sqrt{t}z)-f(x)\right]^{2} dx \right)^{1/2} \!\!\!dz\leq
 \!\!\int\limits_{\infty}^{\infty}N_{1}(z)\,\overline{\lim\limits_{ t\rightarrow 0}}\left(\int\limits_{\infty}^{\infty}\left[ f(x-\sqrt{t}z)-f(x)\right]^{2} dx \right)^{\!\!1/2}\!\!\!\!=0,
$$
from what follows that $\|T_{t}f-f\|_{L_{2}}\rightarrow 0,\;t\rightarrow  0.$
Hence $T_{t}$ is a $C_{0}$ semigroup of contractions.

\end{proof}

 The following theorem is formulated in terms of     the infinitesimal generator   $-A$ of the semigroup $T_{t}.$

\begin{teo}\label{T4}  We claim that $L =Z^{\alpha}_{G,F}(A).$ Moreover,  if    $\min\{\gamma_{a},\delta\},\,(\delta>0)$ is  sufficiently large in comparison with   $\|\rho\|_{L_{\infty}},$ then
a  perturbation $L+\delta I$ satisfies  conditions  H1-H2,  where we put $\mathfrak{M}:=C_{0}^{\infty}(\Omega).$
 \end{teo}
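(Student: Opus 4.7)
The plan is to mirror the proof of Theorem \ref{T3}, replacing the shift semigroup with the Gaussian semigroup $T_t$ of Lemma \ref{L6.1}. First I would identify the infinitesimal generator by differentiating the Gaussian kernel under the integral, obtaining $-Af = \tfrac{1}{2} f''$ on a natural domain. Since $T_t$ is a $C_0$-semigroup of contractions, Corollary 3.6 of \cite{Pasy} gives $\|(\lambda + A)^{-1}\| \leq (\mathrm{Re}\,\lambda)^{-1}$ for $\mathrm{Re}\,\lambda > 0$, so Lemma \ref{L1} shows $A$ is m-accretive; as in Theorem \ref{T3}, the closure of $A|_{C_0^\infty(\Omega)}$ recovers all of $A$.

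Next I would compute $A^\alpha$ via the Balakrishnan formula \eqref{9}. Using Fourier analysis (under which $T_t$ acts as multiplication by $e^{-t\xi^2/2}$, and the Riesz potential $I^{2(1-\alpha)}$ with the paper's normalization of $B_\alpha$ corresponds to the multiplier $|\xi|^{-2(1-\alpha)}$, see \cite[p.117]{firstab_lit:samko1987}), one obtains the identity $I^{2(1-\alpha)} f'' = c_\alpha A^\alpha f$ on $C_0^\infty(\Omega)$ with an explicit constant $c_\alpha$. This lets me set $F := c_\alpha^{-1} I^\sigma_+ \rho\,\cdot$, so that $FA^\alpha$ reproduces the lower-order term of $L$. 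For the principal part, since $A$ is selfadjoint we have $A^{\ast} G A = A G A$; choosing $G$ formally to be multiplication by $4a$ gives $AGA f = \tfrac{1}{4}\tfrac{d^2}{dx^2}\bigl(4a\tfrac{d^2}{dx^2} f\bigr) = \mathcal{T} f$. This establishes the representation $L = Z^\alpha_{G,F}(A)$; however, because $a$ grows like $(1+|x|)^5$, the operator $G$ is unbounded and Theorem \ref{T1} cannot be applied directly---the conditions $\mathrm{H1}$--$\mathrm{H2}$ for $L + \delta I$ must be verified by hand.

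To this end, set $\mathfrak{H} := L_2(\Omega)$, $\mathfrak{H}_+ := H^{2,5}_0(\Omega)$, $\mathfrak{M} := C_0^\infty(\Omega)$. Density of $\mathfrak{M}$ in $\mathfrak{H}_+$ is immediate from the definition, and the compact embedding $\mathfrak{H}_+ \subset\subset \mathfrak{H}$ follows from the Adams-type result cited in the text (valid since $5 > 4$), which yields $\mathrm{H1}$. For $\mathrm{H2}$, integration by parts gives $(\mathcal{T}f, g)_{L_2} = \int a f'' \overline{g''}\,dx$ on $C_0^\infty(\Omega)$; the upper growth bound on $a$ then produces $|(\mathcal{T}f, g)_{L_2}| \leq C \|f\|_{H^{2,5}_0}\|g\|_{H^{2,5}_0}$, while the lower bound $\mathrm{Re}\,a(x) \geq \gamma_a(1+|x|)^5$ yields $\mathrm{Re}\bigl((\mathcal{T}+\delta I) f, f\bigr)_{L_2} \geq \gamma_a\|f''\|^2_{L_2(\Omega,\omega^5)} + \delta\|f\|^2_{L_2} \geq \min(\gamma_a,\delta)\|f\|^2_{H^{2,5}_0}$.

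The main obstacle will be the composition term $I^\sigma_+\rho\,I^{2(1-\alpha)} f''$: since $I^\sigma_+$ is not bounded on $L_2(\mathbb{R})$, the required form estimate cannot be obtained by trivial operator-norm bounds. I would exploit the standing assumption $\sigma/2 + 3/4 < \alpha < 1$ (so that $2\alpha - \sigma > 3/2$), pair with the test function $g$, and shift the left-side fractional integral onto $g$ via the duality $(I^\sigma_+ u, v)_{L_2} = (u, I^\sigma_- v)_{L_2}$ analogous to Lemma \ref{L6}, reducing matters to a form with an integrable kernel and obtaining $|(I^\sigma_+\rho I^{2(1-\alpha)} f'', g)_{L_2}| \leq K\|f\|_{H^{2,5}_0}\|g\|_{H^{2,5}_0}$ with $K = C\|\rho\|_{L_\infty}$. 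Combining this with the previous estimate and imposing $\min(\gamma_a,\delta) > K$ closes the accretivity bound and completes $\mathrm{H2}$.
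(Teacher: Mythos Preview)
Your overall architecture matches the paper's: identify $A$ as (half of) $-d^{2}/dx^{2}$, show $A^{\alpha}$ coincides up to a constant with $I^{2(1-\alpha)}d^{2}/dx^{2}$, realise $\tilde{\mathcal T}$ as $A^{\ast}GA$ with $G$ multiplication by a multiple of $a$, and then verify H1--H2 by hand with $\mathfrak H_{+}=H^{2,5}_{0}(\Omega)$. Two technical choices differ from the paper and are worth noting.

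For the identification of $A^{\alpha}$ you propose Fourier multipliers; the paper instead computes directly from the resolvent formula $(\lambda+A)^{-1}Af$ using the explicit Gaussian kernel and the identity $\int_{0}^{\infty}e^{-(\sigma^{2}+c^{2}/\sigma^{2})}\,d\sigma=\tfrac{\sqrt{\pi}}{2}e^{-2|c|}$, arriving at $A^{\alpha}f=K_{\alpha}\,I^{2(1-\alpha)}f''$ with an explicit $K_{\alpha}$. Both routes work; yours is shorter but imports an outside tool, while the paper's stays within the Balakrishnan/resolvent framework already set up.

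The substantive divergence is your treatment of the composition term. You plan to shift $I^{\sigma}_{+}$ onto $g$ via the pairing $(I^{\sigma}_{+}u,v)=(u,I^{\sigma}_{-}v)$ and then argue with an integrable kernel. This can be made to work but is delicate on the whole line, since neither $I^{\sigma}_{+}u$ nor $I^{\sigma}_{-}v$ need lie in $L_{2}$, so the duality must be set up in an $L_{p}$--$L_{p'}$ pairing and the subsequent bound by $\|f\|_{H^{2,5}_{0}}\|g\|_{H^{2,5}_{0}}$ still requires Sobolev-type estimates. The paper avoids this entirely by a direct norm bound: two applications of the Hardy--Littlewood theorem with limiting exponent give
\[
\|I^{\sigma}_{+}\rho\,I^{2(1-\alpha)}f''\|_{L_{2}}\leq C\,\|\rho\,I^{2(1-\alpha)}f''\|_{L_{q_{1}}}\leq C\|\rho\|_{L_{\infty}}\|f''\|_{L_{q_{2}}},\quad q_{1}=\tfrac{2}{1+2\sigma},\ q_{2}=\tfrac{2}{1+2\sigma+4(1-\alpha)},
\]
and the standing restriction $\sigma/2+3/4<\alpha$ ensures $1<q_{2}<2$; a weighted H\"older inequality then controls $\|f''\|_{L_{q_{2}}}$ by $\|f\|_{H^{2,5}_{0}}$. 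Cauchy--Schwarz finishes the form bound. This is cleaner than the duality route and explains transparently why the exponent condition on $\sigma,\alpha$ is needed.
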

\begin{proof}
Let us prove that
\begin{equation*}\label{37.712}
Af =-\frac{1}{2}\frac{d^{\,2}f }{dx^{2}}\;\mathrm{a.e.},\,f\in \mathrm{D}(A).
\end{equation*}
Consider an operator $J_{n}=n(nI +A)^{-1}.$ It is clear that $AJ_{n}=n(I-J_{n}).$ Using the formula
$$
(nI+A)^{-1}f=\int\limits_{0}^{\infty}e^{-n t}T_{t}f dt,\,n>0,\,f\in L_{2}(\Omega),
$$
 we   easily obtain
$$
J_{n}f(x)=\frac{n}{\sqrt{2\pi }}\int\limits_{0}^{\infty}e^{-n t} t^{-1/2}  dt
\int\limits_{-\infty}^{\infty} e^{-\frac{(x-\tau)^{2}}{2t}}f(\tau)d\tau=\frac{n}{\sqrt{2\pi  }}\int\limits_{-\infty}^{\infty}f(\tau)d\tau
\int\limits_{0}^{\infty}   e^{-nt-\frac{(x-\tau)^{2}}{2t}}t^{-1/2} dt=
$$
$$
 =  \sqrt{\frac{2n}{\pi}} \int\limits_{-\infty}^{\infty}f(\tau)d\tau
\int\limits_{0}^{\infty}   e^{-\sigma^{2}-n\frac{(x-\tau)^{2}}{2\sigma^{2}}}d\sigma,\;t= \sigma^{2}/n.
$$
Applying  the  following   formula (see (3)  \cite[p.336]{firstab_lit:Yosida})
\begin{equation}\label{37.012}
\int\limits_{0}^{\infty}   e^{-(\sigma^{2}+  c^{2}/  \sigma^{2} )}d\sigma=\frac{\sqrt{\pi}}{2}e^{-2 |c| },
\end{equation}
we obtain
$$
 J_{n}f(x)=\sqrt{\frac{ n}{2}} \int\limits_{-\infty}^{\infty}f(\tau) e^{- \sqrt{2n}|x-\tau| }  d\tau = \sqrt{\frac{ n}{2}} \int\limits_{-\infty}^{x}f(\tau) e^{- \sqrt{2n} (x-\tau)  }  d\tau+\sqrt{\frac{ n}{2}} \int\limits_{x}^{\infty}f(\tau) e^{- \sqrt{2n} (\tau-x) }  d\tau=
 $$
 $$
=\sqrt{\frac{ n}{2}}e^{- \sqrt{2n} x} \int\limits_{-\infty}^{x}f(\tau) e^{ \sqrt{2n}   \tau   }  d\tau+\sqrt{\frac{ n}{2}}e^{ \sqrt{2n} x} \int\limits_{x}^{\infty}f(\tau) e^{ -\sqrt{2n}   \tau   }  d\tau,\,f\in L_{2}(\Omega).
 $$
 Consider
  $$
  I_{1}(x)= \int\limits_{-\infty}^{x}f(\tau) e^{ \sqrt{2n} \tau }  d\tau,\;I_{2}(x)=\int\limits_{x}^{\infty}f(\tau) e^{ -\sqrt{2n}   \tau   }  d\tau.
  $$
 Observe that the functions $ f(x) e^{ \sqrt{2n} x },\, f(x) e^{ -\sqrt{2n}x} $ have the same Lebesgue points, then in accordance with the known fact, we have
$ I'_{1}(x)=f(x) e^{ \sqrt{2n} x },\;I'_{2}(x)=-f(x) e^{ -\sqrt{2n}x},$ where $x$ is a  Lebesgue point. Using this result, we get
$$
 \left(J_{n}f(x)\right)'  =-n\int\limits_{-\infty}^{x}f(\tau) e^{- \sqrt{2n} ( x-\tau) }  d\tau+n   \int\limits_{x}^{\infty}f(\tau) e^{- \sqrt{2n} ( \tau-x)  }  d\tau \;\mathrm{a.e.}
 $$
  Analogously, we have almost everywhere
$$
 \left(J_{n}f(x)\right)''  =n  \left\{  \sqrt{2n}\int\limits_{-\infty}^{x}f(\tau) e^{- \sqrt{2n} (x-\tau) }  d\tau+\sqrt{2n}\int\limits_{x}^{\infty}f(\tau) e^{- \sqrt{2n} (\tau-x)  }  d\tau   -2f(x)   \right\}=
 $$
$$
=2n(J_{n}-I)f(x)=-2AJ_{n}f(x),
$$
  taking into account the fact  $\mathrm{R}(J_{n})=\mathrm{R}(R_{A}(n))=\mathrm{D}(A),$  we obtain the desired result.

In accordance with the reasonings of  \cite[p.336]{firstab_lit:Yosida}, we have $C(\Omega)\subset \mathrm{D}(A).$ Denote by $A_{0}$ a restriction of $A$ on $C^{\infty}_{0}(\Omega).$ Using Lemma \ref{L1}, we conclude that $\tilde{A}_{0}=A,$ since there does not exist an accretive extension of an m-accretive operator.  Now, it is clear that
\begin{equation}\label{37.394}
\| A f\|_{L_{2}}\leq \|f\|_{H^{2,\,5}_{0} },\,f\in H^{2,\,5}_{0}( \Omega),
\end{equation}
whence $H^{2,\,5}_{0}( \Omega)\subset \mathrm{D}( A).$
Let us establish the representation $L= Z^{\alpha}_{G,F}(J).$  Since     the operator  $A$ is m-accretive, then using formula  \eqref{9},  we can define positive fractional powers $\alpha\in (0,1)$ of the operator $ A . $
       Applying  the   relations  obtained  above, we can calculate
\begin{equation}\label{37.014}
(\lambda I+ A )^{-1} A f(x)=-\frac{1}{2\sqrt{2\pi  }}\int\limits_{0}^{\infty}e^{-\lambda t}  t^{-1/2} dt
\int\limits_{-\infty}^{\infty} e^{-\frac{(x-\tau)^{2}}{2t}}f''(\tau)d\tau=
$$
$$
=-\frac{1}{2\sqrt{2\pi  }}\int\limits_{-\infty}^{\infty}f''(\tau)d\tau
\int\limits_{0}^{\infty}   e^{-\lambda t-\frac{(x-\tau)^{2}}{2t}}t^{-1/2}dt,\;f\in C_{0}^{\infty}(\Omega).
\end{equation}
Here,   substantiation of the interchange of the integration order  can be easily obtained due to the  properties of the function. We have for arbitrary chosen $x,\lambda$
$$
\int\limits_{-A}^{A}f''(\tau)d\tau
\int\limits_{0}^{\infty}   e^{-\lambda t-\frac{(x-\tau)^{2}}{2t}}t^{-1/2}dt=\int\limits_{-A-x}^{A-x}f''(x+s)ds
\int\limits_{0}^{1}   e^{-\lambda t- s^{2}/2t }t^{-1/2}dt+
$$
$$
+\int\limits_{-A-x}^{A-x}f''(x+s)ds
\int\limits_{1}^{\infty}   e^{-\lambda t- s^{2}/2t }t^{-1/2}dt.
$$
Observe that  the inner integrals converge uniformly with respect to $s,$ it is also clear that the function under the integrals is continuous regarding to $s,t,$ except of the set of points $(s;t_{0}),\,t_{0}=0.$ Hence applying the well-known theorem of calculus, we obtain \eqref{37.014}. Consider

$$
\int\limits_{-\infty}^{\infty}f''(x+s)ds
\int\limits_{0}^{\infty}   e^{-\lambda t- s^{2}/2t }t^{-1/2}dt =2\lambda^{-1/2}\int\limits_{-\infty}^{\infty}f''(x+s)ds
\int\limits_{0}^{\infty}   e^{-\sigma^{2}- c^{2}/\sigma^{2} } d\sigma=I,
$$
where $c^{2}=s^{2}\lambda/2.$ Using   formula \eqref{37.012}, we obtain
$$
I=\sqrt{\pi}\lambda^{-1/2} \int\limits_{-\infty}^{\infty}f''(x+s) e^{-\sqrt{2\lambda}|s|}ds=\sqrt{\pi}\lambda^{-1/2} \int\limits_{0}^{\infty}f''(x+s) e^{-\sqrt{2\lambda} s }ds+
  \sqrt{\pi}\lambda^{-1/2} \int\limits_{0}^{\infty}f''(x-s) e^{-\sqrt{2\lambda}  s }ds.
  $$
  Thus, combining formulas \eqref{9},\eqref{37.014},  we conclude that
$$
 A  ^{\alpha}f(x) = - \frac{2^{-3/2} }{\Gamma(1-\alpha)\Gamma(\alpha)} \int\limits_{0}^{\infty}\lambda^{\alpha -3/2}d\lambda\int\limits_{-\infty}^{\infty}f''(x+s) e^{-\sqrt{2\lambda}|s|}ds,\,f\in C_{0}^{\infty}(\Omega).
$$
We  easily prove that
\begin{equation}\label{37.023}
\int\limits_{\varepsilon}^{\infty}f''(x+s)ds\int\limits_{0}^{\infty}\lambda^{\alpha -3/2}e^{-\sqrt{2\lambda} s }d\lambda =\int\limits_{0}^{\infty}\lambda^{\alpha -3/2}d\lambda\int\limits_{\varepsilon}^{\infty}f''(x+s)e^{-\sqrt{2\lambda} s }ds,\,f\in C_{0}^{\infty}(\Omega).
\end{equation}
Let us show that
\begin{equation}\label{37.024}
\int\limits_{0}^{\infty}\lambda^{\alpha -3/2}d\lambda\int\limits_{\varepsilon}^{\infty}f''(x+s)e^{-\sqrt{2\lambda} s }ds\rightarrow \int\limits_{0}^{\infty}\lambda^{\alpha -3/2}d\lambda\int\limits_{0}^{\infty}f''(x+s)e^{-\sqrt{2\lambda} s }ds,\,\varepsilon \rightarrow 0,
\end{equation}
we have
$$
\left|\int\limits_{0}^{\infty}\lambda^{\alpha -3/2}d\lambda\int\limits_{0}^{\varepsilon}f''(x+s)e^{-\sqrt{2\lambda} s }ds\right|\leq\|f''\|_{L_{\infty}}\int\limits_{0}^{\infty}\lambda^{\alpha -3/2}d\lambda\int\limits_{0}^{\varepsilon} e^{-\sqrt{2\lambda} s }ds=
$$
$$
= \frac{1}{\sqrt{2}}\|f''\|_{L_{\infty}}\int\limits_{0}^{\infty}\lambda^{\alpha - 2} \left( 1-e^{- \sqrt{2\lambda} \varepsilon }\right)d\lambda =
$$
$$
=
 \varepsilon^{2(1-\alpha)}2^{3/2-\alpha}\|f''\|_{L_{\infty}}\int\limits_{0}^{\infty}t^{2\alpha - 3} \left( 1-e^{- t }\right)dt
 \rightarrow 0,\,\varepsilon\rightarrow0,
$$
from what follows the desired result.
Using simple calculations, we get
\begin{equation}\label{37.025}
\int\limits_{0}^{\varepsilon}f''(x+s) ds\int\limits_{0}^{\infty}e^{-\sqrt{2\lambda} s }\lambda^{\alpha -3/2}d\lambda=
 $$
 $$
 =2^{3/2-\alpha}\Gamma(2\alpha-1) \int\limits_{0}^{\varepsilon}f''(x+s)s^{1-2\alpha} ds \leq C\|f''\|_{L_{\infty} }\varepsilon^{2(1-\alpha)}\rightarrow 0,\,\varepsilon\rightarrow0.
\end{equation}
In accordance with \eqref{37.023}, we can write
$$
\int\limits_{0}^{\infty}f''(x+s)ds\int\limits_{0}^{\infty}\lambda^{\alpha -3/2}e^{-\sqrt{2\lambda} s }d\lambda =\int\limits_{0}^{\infty}\lambda^{\alpha -3/2}d\lambda\int\limits_{\varepsilon}^{\infty}f''(x+s)e^{-\sqrt{2\lambda} s }ds+\int\limits_{0}^{\varepsilon}f''(x+s) ds\int\limits_{0}^{\infty}e^{-\sqrt{2\lambda} s }\lambda^{\alpha -3/2}d\lambda.
$$
  Passing to the limit at the right-hand side, using \eqref{37.024},\eqref{37.025},  we obtain
$$
 \int\limits_{0}^{\infty}\lambda^{\alpha -3/2}d\lambda\int\limits_{0}^{\infty}f''(x+s)e^{-\sqrt{2\lambda} s }ds=\int\limits_{0}^{\infty}f''(x+s)ds\int\limits_{0}^{\infty}\lambda^{\alpha -3/2}e^{-\sqrt{2\lambda} s }d\lambda=
  $$
  $$
  =2^{3/2-\alpha}\Gamma(2\alpha-1) \int\limits_{0}^{\infty}f''(x+s)s^{1-2\alpha} ds  .
$$
Taking into account the analogous reasonings, we conclude that
$$
 A ^{\alpha}f(x)= - \frac{\Gamma(2\alpha-1)}{2^{  \alpha}\Gamma(\alpha)\Gamma(1-\alpha)}  \int\limits_{-\infty}^{\infty}f''(x+s)|s|^{1-2\alpha} ds=  K_{\alpha} I^{\alpha}f''(x),\;
 $$
 $$
 K_{\alpha}=-\frac{    \Gamma(2\alpha-1)\cos  \alpha \pi / 2 }{2^{  \alpha-1} \Gamma(1-\alpha)},\,f\in C_{0}^{\infty}(\Omega).
$$

Using   the  Hardy-Littlewood theorem with limiting exponent (see Theorem 5.3 \cite[p.103]{firstab_lit:samko1987}),  we get
\begin{equation}\label{37.028}
  \|A ^{\alpha}f\|_{L_{2}}\leq C\|I^{2(1-\alpha)}_{+}f''\|_{L_{2}}+C\|I^{2(1-\alpha)}_{-}f''\|_{L_{2}} \leq C \|f''\|_{L_{q}} ,\, f  \in C^{\infty}_{0}(\Omega),
\end{equation}
where $q=2/(5-4 \alpha ).$   Applying the H\"{o}lder inequality, we obtain
\begin{equation}\label{37.727}
\left(\int\limits_{-\infty}^{\infty}|f''(x)|^{q }(1+|x|)^{5q /2}(1+|x|)^{-5q /2}dx\right)^{1/q }\leq
$$
$$
\leq\left(\int\limits_{-\infty}^{\infty}|f''(x)|^{2}(1+|x|)^{5} dx\right)^{1/2}\left(\int\limits_{-\infty}^{\infty} (1+|x|)^{-5q \gamma/2}dx\right)^{1/q \gamma}\leq C\|f\|_{H_{0}^{2,5}},\,f \in C^{\infty}_{0}(\Omega),
\end{equation}
where $1<q<2,\,\gamma=2/(2 -q )>1.$
  Combining  \eqref{37.028},\eqref{37.727} and passing to the limit, we get
\begin{equation}\label{37.027}
  \|A ^{\alpha}f\|_{L_{2}}\leq C \|f\|_{H_{0}^{2,5}},\,f\in H_{0}^{2,5}(\Omega).
\end{equation}
Hence  $ H_{0}^{2,5}(\Omega)\subset \mathrm{D}(A ^{\alpha}).$
Using   the  Hardy-Littlewood theorem with limiting exponent, we obtain
 $$
 \|I^{\sigma}_{+}\rho I^{2(1-\alpha)} f''\|_{L_{2}}\leq C \| \rho I^{2(1-\alpha)} f''\|_{L_{q_{1}}}\leq C_{\rho}\|     f''\|_{L_{q_{2}}},\,f\in C_{0}^{\infty}( \Omega),\,C_{\rho}=C\|\rho\|_{L_{\infty}},
 $$
 where $q_{1}=2/(1+2\sigma),\,q_{2}=q_{1}/(1+2q_{1}[1-\alpha]).$   We can rewrite  $q_{2}=2/(1+2\sigma +4 [1-\alpha]),$ thus  $1<q_{2}<2.$
Applying  formula \eqref{37.727} and passing to the limit, we get
 \begin{equation}\label{39.917}
 \|I^{\sigma}_{+}\rho I^{2(1-\alpha)} f''\|_{L_{2}}\ \leq C_{\rho}\|     f \|_{H^{2,5}_{0}},\,f\in H_{0}^{2,5}(\Omega).
 \end{equation}
Note that
 \begin{equation}\label{37.127}
 \int\limits_{\Omega}  \mathcal{T }f\, \bar{g}  dx= \int\limits_{\Omega}    a   f''\,  \overline{g''}  dx,\; f,g\in C_{0}^{\infty}( \Omega).
 \end{equation}
Therefore  $  \mathcal{T} $ is accretive,   applying  Lemma \ref{L1}   we  deduce that $\tilde{\mathcal{T}}$ is m-accretive.
Using  relation \eqref{37.394},\eqref{37.127} we can easily obtain
$
\|\tilde{\mathcal{T }}f\|_{L_{2}}\geq  \gamma_{a}\|f\|_{H_{0}^{2,5}}\geq C \|Af\|_{L_{2}},\; f \in \mathrm{D}(\tilde{\mathcal{T }}),
$
whence $\mathrm{D}(\tilde{\mathcal{T }})\subset H_{0}^{2,5}( \Omega) \subset \mathrm{D}(A).$
    Using simple reasonings, we can extend  relation \eqref{37.127}  and rewrite it in the following form
$$
 \int\limits_{\Omega}  \tilde{\mathcal{T }}f\,\bar{g}  dx= \frac{1}{4}\int\limits_{\Omega}    a   Af\,   \overline{Ag}   dx,\; f\in \mathrm{D}(\tilde{\mathcal{T}}),\,g\in \mathrm{D}(A),
 $$
whence $\tilde{\mathcal{T}}\subset A^{\ast}G A,$ where $G:=a/4.$ Since the operator $\tilde{\mathcal{T}}$ is m-accretive, $A^{\ast}G A$ is accretive, then $\tilde{\mathcal{T}}= A^{\ast}G A.$
Hence, taking into account the inclusion $\mathrm{D}(\tilde{\mathcal{T }})\subset H_{0}^{2,5}( \Omega),$ relation \eqref{39.917}, we conclude that
$
L= A^{\ast}G A+F A^{\alpha},
$
where $ F:=\rho I.$

Let us prove that the operator $L$ satisfies     conditions H1--H2.     Choose the space  $L_{2}(\Omega)$ as a space $\mathfrak{H},$ the set  $C_{0}^{\infty}(\Omega)$ as a linear  manifold $\mathfrak{M},$ and the space  $H^{2,5}_{0}(\Omega)$ as a space $\mathfrak{H}_{+}.$    By virtue of     Theorem 1 \cite{firstab_lit:1Adams}, we have  $H^{2,5}_{0}(\Omega)\subset\subset L_{2}(\Omega).$
Thus, condition  H1  is satisfied.

Using   simple  reasonings (the proof is omitted), we come to the following inequality
 \begin{equation*}\label{27.1}
\left|\int\limits_{-\infty}^{\infty}  (\tilde{\mathcal{T}}+\delta I) f \cdot  \bar{g} \, dx\right|\leq C\|f\|_{H^{2,5}_{0}}\|g\|_{H^{2,5}_{0}},\; f,g\in C_{0}^{\infty}( \Omega).
\end{equation*}
Applying    the Cauchy Schwarz inequality, relation  \eqref{39.917},  we obtain
\begin{equation}\label{37.223}
 |\left(I^{\sigma}_{+}\rho I^{2(1-\alpha)} f'', g\right)_{ L _{2}}|  \leq C_{\rho} \|f\|_{H_{0}^{2,5}}\|g\|_{H_{0}^{2,5}},\,f ,\,g\in C^{\infty}_{0}(\Omega).
\end{equation}
On the other hand, using the conditions  imposed on the function $a(x),$  it is not hard to prove that
\begin{equation*}\label{27.2}
 \mathrm{Re}( [\tilde{\mathcal{T}}+\delta I]f,f)\geq \min\{\gamma_{a},\delta\}\|f\|^{2}_{H^{2,5}_{0}},\,f\in C_{0}^{\infty}( \Omega).
\end{equation*}
Using relation \eqref{37.223}, we can easily obtain
$$
\mathrm{Re}( I^{\sigma}_{+}\rho I^{2(1-\alpha)} f'',f)\geq -C_{\rho}\|f\|^{2}_{H^{2,5}_{0}},\,f\in C_{0}^{\infty}( \Omega).
$$
Combining the above estimates,  we conclude that   if the condition   $\min\{\gamma_{a},\delta\}>C_{\rho}$ holds,  then   $\mathrm{Re}( L f,f)\geq C \|f\|^{2}_{H^{2,5}_{0}},\,f\in C_{0}^{\infty}( \Omega).$ Thus, condition H2 is satisfied.
\end{proof}

\vspace{0.5cm}

\noindent{\bf Difference operator}\\

Consider a   space $L_{2}(\Omega),\,\Omega:=(-\infty,\infty),$   define a family of operators
$$
T_{t}f(x):=e^{-\lambda t}\sum\limits_{k=0}^{\infty}\frac{(\lambda t)^{k}}{k!}f(x-k\mu),\,f\in L_{2}(\Omega),\;\lambda,\mu>0,\; t\geq0,
$$
where convergence is understood in the sense of $L_{2}(\Omega)$ norm. It is not hard to prove that $T_{t}: L_{2}\rightarrow L_{2},$ for this purpose it is sufficient to note that
\begin{equation}\label{38.03}
\left\|\sum\limits_{k=n}^{n+p}\frac{(\lambda t)^{k}}{k!}f(\cdot-k\mu) \right\|_{L_{2}}\leq  \left\|  f \right\|_{L_{2}}\sum\limits_{k=n}^{n+p}\frac{(\lambda t)^{k}}{k!}.
\end{equation}
\begin{lem}
$T_{t}$ is a $C_{0}$ semigroup of contractions, the corresponding  infinitesimal generator and its adjoint operator are defined by the following expressions
$$
Af(x)=\lambda[f(x)-f(x-\mu)],\,A^{\ast}f(x)=\lambda[f(x)-f(x+\mu)],\,f\in L_{2}(\Omega).
$$
\end{lem}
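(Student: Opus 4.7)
The plan is to check in turn that $T_t$ satisfies (i) the contraction bound $\|T_t\|\le1$, (ii) the semigroup law $T_tT_s=T_{t+s}$ with $T_0=I$, (iii) strong continuity at $0$, and then (iv) to compute the generator directly from the definition. The expressions for $A$ and $A^\ast$ will drop out of (iv) plus a one-line change of variables.

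For (i), translation invariance of Lebesgue measure gives $\|f(\cdot-k\mu)\|_{L_2}=\|f\|_{L_2}$, so the Minkowski-style estimate already recorded in \eqref{38.03} passes to infinite sums and yields $\|T_tf\|_{L_2}\le e^{-\lambda t}\sum_{k=0}^{\infty}\frac{(\lambda t)^k}{k!}\|f\|_{L_2}=\|f\|_{L_2}$. For (ii), $T_0=I$ is clear from the $k=0$ term. To obtain $T_tT_s=T_{t+s}$ I would expand the double series, use translation invariance to justify the interchange of summation (absolute convergence in $L_2$ follows from the same bound as in (i)), collect terms by $n=j+k$, and recognise the binomial identity
\[
\sum_{k=0}^{n}\binom{n}{k}(\lambda t)^k(\lambda s)^{n-k}=(\lambda(t+s))^n,
\]
which gives exactly $T_{t+s}f$.

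For (iii), split $T_tf-f=(e^{-\lambda t}-1)f+e^{-\lambda t}\sum_{k=1}^{\infty}\tfrac{(\lambda t)^k}{k!}f(\cdot-k\mu)$. The first summand has $L_2$-norm $|e^{-\lambda t}-1|\cdot\|f\|_{L_2}\to0$, and the second is bounded in norm by $e^{-\lambda t}(e^{\lambda t}-1)\|f\|_{L_2}=(1-e^{-\lambda t})\|f\|_{L_2}\to0$. Hence $T_t$ is strongly continuous, and by Lemma \ref{L6.1}-style reasoning (applied here verbatim) $T_t$ is a $C_0$ semigroup of contractions.

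For (iv), write
\[
\frac{T_tf-f}{t}=\frac{e^{-\lambda t}-1}{t}\,f+\lambda e^{-\lambda t}f(\cdot-\mu)+\frac{e^{-\lambda t}}{t}\sum_{k=2}^{\infty}\frac{(\lambda t)^k}{k!}f(\cdot-k\mu).
\]
The first two terms converge in $L_2$ to $-\lambda f$ and $\lambda f(\cdot-\mu)$ respectively; the tail is bounded in norm by $\frac{e^{-\lambda t}}{t}(e^{\lambda t}-1-\lambda t)\|f\|_{L_2}=O(t)$. Thus the $L_2$-limit $-Af=\lim_{t\to0^+}(T_tf-f)/t$ exists \emph{for every} $f\in L_2(\Omega)$ and equals $-\lambda[f(x)-f(x-\mu)]$, so $\mathrm{D}(A)=L_2(\Omega)$ and $Af(x)=\lambda[f(x)-f(x-\mu)]$. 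Since $A\in\mathcal{B}(L_2)$, $A^\ast$ is computed by a single substitution $y=x-\mu$ in $(Af,g)_{L_2}$, giving $A^\ast g(x)=\lambda[g(x)-g(x+\mu)]$.

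The only genuinely delicate point is the interchange of summation in (ii) and the passage to the limit inside the series in (iii) and (iv); in each case, absolute convergence controlled by $e^{\lambda t}\|f\|_{L_2}$ handles the issue, so no real obstacle is expected.
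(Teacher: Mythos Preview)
Your proof is correct and follows essentially the same line as the paper's: the contraction bound via \eqref{38.03}, the semigroup law via the double-series rearrangement and the binomial identity, the same two-term split for strong continuity, and the same $k=0$, $k=1$, $k\ge 2$ decomposition for computing the generator (the paper writes $(I-T_t)f/t$ where you write $(T_tf-f)/t$, which is immaterial). The reference to ``Lemma \ref{L6.1}-style reasoning'' is superfluous, since you have already verified (i)--(iii) directly; otherwise the arguments coincide.
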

\begin{proof}
Assume that $f\in L_{2}(\Omega).$  Analogously to  \eqref{38.03}, we easily  prove that $\|T_{t}f\|_{L_{2}}\leq \|f\|_{L_{2}}.$ Consider
$$
T_{s}T_{t}f(x)=e^{-\lambda s}\sum\limits_{n=0}^{\infty}\frac{(\lambda s)^{n}}{n!}\left[e^{-\lambda t}\sum\limits_{k=0}^{\infty}\frac{(\lambda t)^{k}}{k!}f(x-k\mu-n\mu)\right].
$$
Since we have
$$
 \left\|\sum\limits_{k=0}^{m}\frac{(\lambda t)^{k}}{k!}f(x-k\mu)\right\|_{L_{2}}\leq \|    f \|_{L_{2}}  \sum\limits_{k=0}^{m}\frac{(\lambda t)^{k}}{k!},
$$
then similarly  to the case corresponding to $C(\Omega)$ norm (the prove is based upon the properties of the absolutely convergent double series, see Example 3 \cite[p.327]{firstab_lit:Yosida} ), we conclude that
$$
T_{s}T_{t}f(x)=e^{-\lambda  s }\sum\limits_{n=0}^{\infty}\frac{(\lambda s)^{n}}{n!} \left[  e^{-\lambda  t } \sum\limits_{k=0}^{\infty}\frac{(\lambda t)^{k}}{k!} f(x-k\mu-n\mu )\right]=
$$
$$
=e^{-\lambda (s+t)}\sum\limits_{p=0}^{\infty}\frac{1}{p!} \left[p! \sum\limits_{n=0}^{p}\frac{(\lambda s)^{n}}{n!}\frac{(\lambda t)^{p-n}}{(p-n)!}f(x-p\mu )\right]=
$$
$$
 =e^{-\lambda (s+t)}\sum\limits_{p=0}^{\infty}\frac{1}{p!}  (\lambda s+\lambda t)^{p} f(x-p\mu ) =T_{s+t}f(x),
$$
where equality is understood in the sense of $L_{2}(\Omega)$  norm. Let us establish the strongly continuous property. For sufficiently small $t,$ we have
$$
\|T_{t}f-f\|_{L_{2}}\leq e^{-\lambda t}( e^{\lambda t}-1)\|f\|_{L_{2}}+e^{-\lambda t}\left\|\sum\limits_{k=1}^{\infty}\frac{(\lambda t)^{k}}{k!}f(\cdot\,-k\mu)\right\|_{L_{2}}\leq    t e^{-\lambda t}\|f\|_{L_{2}} \left\{C+\sum\limits_{k=0}^{\infty}\frac{(\lambda )^{k+1}t^{k}}{(k+1)!}\right\},
$$
from what  follows that
$$
\|T_{t}f-f\|_{L_{2}}\rightarrow 0,\,t\rightarrow 0.
$$
Taking into account the above facts, we conclude that $T_{t}$ is a $C_{0}$  semigroup of contractions. Let us show that
$$
Af(x)=\lambda[f(x)-f(x-\mu)],
$$
we have (the proof is omitted)
$$
\frac{(I-T_{t})f(x)}{t}=   \frac{1-e^{-\lambda t}}{t} f(x)- \lambda e^{-\lambda t}f(x-\mu)-te^{-\lambda t}\sum\limits_{k=2}^{\infty}\frac{ \lambda   ^{k}t^{k-2}}{k!}f(x-k\mu).
$$
Hence
$$
 \frac{(I-T_{t})f }{t} \stackrel{L_{2}}{ \longrightarrow} \lambda[f -f(\cdot\,-\mu)],\,t\downarrow0,
$$
thus, we have obtained the desired result.
 Using   change of    variables  in integral it is easy to show that
$$
\int\limits_{-\infty}^{\infty}Af(x)g(x)dx=\int\limits_{-\infty}^{\infty}f(x)\lambda[g(x)-g(x+\mu)]dx,\,f,g\in L_{2}(\Omega),
$$
hence  $A^{\ast}f(x)=\lambda[f(x)-f(x+\mu)],\,f\in L_{2}(\Omega).$  The proof is complete.
\end{proof}

It is remarkable that there are some difficulties to apply theorems  (A)-(C)  to a transform  $Z^{\alpha}_{aI,bI}(A),$ where $a,b$ are functions,   and the main of them can be said as follows "it is not clear how we should    build a space $\mathfrak{H}_{+}$". However we can consider a rather  abstract  perturbation of the above transform in order to reveal its spectral properties.

\begin{teo} Assume that  $Q$ is a   closed operator acting in $L_{2}(\Omega),\,Q^{-1}\in \mathcal{K}(L_{2}),$ the operator $N$ is strictly accretive, bounded, $\mathrm{R}(Q)\subset \mathrm{D}(N).$ Then
a perturbation
$$
L:= Z^{\alpha}_{aI,bI}(A)+ Q^{\ast}N Q ,\;a,b\in L_{\infty}(\Omega),\,\alpha\in(0,1)
$$
   satisfies conditions  H1--H2, if  $\gamma_{N}>\sigma\|Q^{-1}\|^{2},$
where we put $\mathfrak{M}:=\mathrm{D}_{0}(Q),$
$$
 \sigma= 4\lambda\|a\|_{L_{\infty}}+\|b\|_{L_{\infty}}\frac{\alpha\lambda^{\alpha}   }{\Gamma(1 -\alpha)}
 \sum\limits_{k=0}^{\infty}\frac{ \Gamma(k -\alpha)}{k! }.
$$
\end{teo}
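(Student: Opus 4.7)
The plan is to verify the two structural conditions of Theorem \ref{T1a} directly, decomposing the form $(Lf,g)_{L_{2}}$ into its three summands $A^{\ast}aA$, $bA^{\alpha}$ and $Q^{\ast}NQ$ and estimating each separately.

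For H1 I would take $\mathfrak{H}_{+}:=\mathfrak{H}_{Q}$, equipped with the norm $\|f\|_{\mathfrak{H}_{Q}}:=\|Qf\|_{L_{2}}$. The hypothesis $Q^{-1}\in \mathcal{K}(L_{2})$ supplies at once the injectivity of $Q$ (so that the norm makes sense), the bounded embedding $\|f\|_{L_{2}}\leq\|Q^{-1}\|\cdot\|f\|_{\mathfrak{H}_{Q}}$, and the compactness of the inclusion $\mathfrak{H}_{Q}\subset\subset L_{2}$. The manifold $\mathfrak{M}:=\mathrm{D}_{0}(Q)$ is dense in $\mathfrak{H}_{Q}$ by the very definition of a core. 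The preceding lemma identifies $A$ and $A^{\ast}$ as bounded difference operators on $L_{2}$ with $\|A\|\leq 2\lambda$, so the Balakrishnan formula \eqref{9} defines $A^{\alpha}\in \mathcal{B}(L_{2})$ on all of $L_{2}$; consequently $A^{\ast}aA$ and $bA^{\alpha}$ are bounded everywhere, while $(Q^{\ast}NQf,g)_{L_{2}}=(NQf,Qg)_{L_{2}}$ is a well-defined sesquilinear pairing on $\mathrm{D}_{0}(Q)\times \mathrm{D}_{0}(Q)$. Taken together, this shows $(Lf,g)_{L_{2}}$ is meaningful for $f,g\in \mathfrak{M}$.

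For the boundedness half of H2 I would write
\[
(Lf,g)_{L_{2}}=(aAf,Ag)_{L_{2}}+(bA^{\alpha}f,g)_{L_{2}}+(NQf,Qg)_{L_{2}}
\]
and apply Cauchy--Schwarz to each term using $\|Af\|_{L_{2}}\leq 2\lambda\|f\|_{L_{2}}$, an explicit bound $\|A^{\alpha}f\|_{L_{2}}\leq M_{\alpha}\|f\|_{L_{2}}$ discussed below, and boundedness of $N$; the estimate $\|f\|_{L_{2}}\leq\|Q^{-1}\|\cdot\|Qf\|_{L_{2}}$ is then inserted in each piece to convert $L_{2}$-norms into $\mathfrak{H}_{+}$-norms. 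Taking real parts in the same decomposition yields the lower bound: strict accretivity of $N$ gives $\mathrm{Re}(NQf,Qf)_{L_{2}}\geq \gamma_{N}\|Qf\|_{L_{2}}^{2}$, while the other two pieces contribute at worst $-\sigma\|Q^{-1}\|^{2}\|Qf\|_{L_{2}}^{2}$, with $\sigma$ absorbing the constants coming from the $aA$- and $bA^{\alpha}$-estimates. The hypothesis $\gamma_{N}>\sigma\|Q^{-1}\|^{2}$ then delivers a positive lower bound $C_{2}=\gamma_{N}-\sigma\|Q^{-1}\|^{2}$, as H2 requires.

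The main obstacle I foresee is producing the explicit constant $M_{\alpha}$ in precisely the form demanded by the $b$-coefficient of $\sigma$. The plan for that is to substitute the explicit semigroup $T_{t}f(x)=e^{-\lambda t}\sum_{k\geq 0}(\lambda t)^{k}f(x-k\mu)/k!$ into the Balakrishnan representation $A^{\alpha}f=\Gamma(-\alpha)^{-1}\int_{0}^{\infty}t^{-\alpha-1}(T_{t}-I)f\,dt$, justify the interchange of sum and $t$-integral by absolute convergence in $L_{2}$ in the spirit of the preceding lemma, and evaluate each resulting integral via $\int_{0}^{\infty}t^{k-\alpha-1}e^{-\lambda t}\,dt=\lambda^{\alpha-k}\Gamma(k-\alpha)$. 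This produces a shift-series representation of $A^{\alpha}$ whose $k$-th coefficient is a multiple of $\Gamma(k-\alpha)/k!$; after absorbing the $k=0$ term using $\Gamma(-\alpha)=-\Gamma(1-\alpha)/\alpha$, Minkowski's inequality yields a bound with precisely the constant appearing in $\sigma$, the convergence of the series being secured by the asymptotics $\Gamma(k-\alpha)/k!\sim k^{-1-\alpha}$ for large $k$.
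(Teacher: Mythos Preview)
Your proposal is correct and follows essentially the same route as the paper: the same choice $\mathfrak{H}_{+}=\mathfrak{H}_{Q}$, the same three-term decomposition of the form, the same series representation $A^{\alpha}f=\sum_{k}C_{k}f(\cdot-k\mu)$ extracted from Balakrishnan's formula with $|C_{k}|\sim k^{-1-\alpha}$, and the same balance of $\gamma_{N}$ against $\sigma\|Q^{-1}\|^{2}$ in the coercivity estimate. The one step the paper treats a bit more carefully is the clause of H1 that the operator $L$ (not merely its form) be defined on $\mathfrak{M}$: you only check that the pairing $(Q^{\ast}NQf,g)=(NQf,Qg)$ makes sense on $\mathrm{D}_{0}(Q)\times\mathrm{D}_{0}(Q)$, whereas the paper invokes the Lax--Milgram argument from Theorem~\ref{T1} to show that $\mathrm{D}(Q^{\ast}NQ)$ is itself a core of $Q$, and then takes that specific core as $\mathfrak{M}$.
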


\begin{proof}
Let us find a representation for fractional powers of the operator $A.$ Using the Balakrishnan formula (5) \cite[p.260]{firstab_lit:Yosida}, we get
\begin{equation}\label{27.17}
   A^{\alpha}f=\sum\limits_{k=0}^{\infty}C_{k}f(x-k\mu), \,f\in C^{\infty}_{0}(\Omega),
\end{equation}
$$
   \,C_{k}=-\frac{\alpha \lambda^{k} }{k!\Gamma(1-\alpha)}\int\limits_{0}^{\infty}e^{-\lambda t}t^{k-1-\alpha}dt=-\frac{\alpha\Gamma(k -\alpha)}{k!\Gamma(1 -\alpha)}\lambda^{\alpha},\,k=0,1,2,...,\,.
$$
Let us  extend    relation \eqref{27.17} to $L_{2}(\Omega).$    We have almost everywhere
$$
\sum\limits_{k=0}^{\infty}C_{k} g (x-k\mu)  -\sum\limits_{k=0}^{\infty}C_{k}  f (x-k\mu)=\sum\limits_{k=0}^{\infty}C_{k}[g(x-k\mu)-f (x-k\mu)],\,g\in C_{0}^{\infty}(\Omega),\,f\in L_{2}(\Omega),
$$
since the first sum is a partial sum for a fixed $x\in \mathbb{R}.$
  In accordance with formula (1.66) \cite[p.17]{firstab_lit:samko1987}, we have    $|C_{k}|\leq C \,k^{-1-\alpha},$  hence
$$
\left\|\sum\limits_{k=0}^{\infty}C_{k}[g(\cdot-k\mu)-f (\cdot-k\mu)]\right\|_{L_{2}}\leq\|g-f\|_{L_{2}}\sum\limits_{k=0}^{\infty}|C_{k}| .
$$
Thus, we obtain
$$
\forall f\in L_{2}(\Omega),\,\exists \{f_{n}\}\in C_{0}^{\infty}(\Omega):     \,f_{n}\stackrel{L_{2}}{ \longrightarrow} f,\;A^{\alpha}f_{n}\stackrel{L_{2}}{ \longrightarrow} \sum\limits_{k=0}^{\infty}C_{k}  f (\cdot-k\mu).
$$
Since $A^{\alpha}$ is closed, then
 \begin{equation}\label{27.171}
   A^{\alpha}f=\sum\limits_{k=0}^{\infty}C_{k}f(x-k\mu),\, f\in L_{2}(\Omega).
\end{equation}
Moreover, it is clear that $C^{\infty}_{0}(\Omega)$ is a core of $A^{\alpha}.$
 On the other hand, applying formula \eqref{9}, using the notation $\eta(x)=\lambda[f(x)-f(x-\mu)],$ we get
$$
A^{\alpha}f(x)=\frac{\sin\alpha \pi}{\pi}\int\limits_{0}^{\infty}\xi^{\alpha-1}(\xi I+A)^{-1}Af(x) d\xi=\frac{\sin\alpha \pi}{\pi}\int\limits_{0}^{\infty}\xi^{\alpha-1}d \xi\int\limits_{0}^{\infty}e^{-\xi t}T_{t}\eta(x)dt=
$$
$$
 =\frac{\sin\alpha \pi}{\pi}\sum\limits_{k=0}^{\infty}\frac{ \lambda   ^{k}}{k!}\eta(x-k\mu)\int\limits_{0}^{\infty}\xi^{\alpha-1} d \xi\int\limits_{0}^{\infty} e^{-t(\xi+\lambda)   }t^{k} dt=
$$
$$
 =\frac{\sin\alpha \pi}{\pi}\sum\limits_{k=0}^{\infty}\frac{ \lambda   ^{k}}{k!}\eta(x-k\mu)\int\limits_{0}^{\infty}\xi^{\alpha-1}(\xi+\lambda)^{-k-1}d \xi\int\limits_{0}^{\infty} e^{-t   }t^{k} dt,\,f\in C_{0}^{\infty}(\Omega),
$$
  we can rewrite the previous relation as follows
\begin{equation}\label{27.19}
A^{\alpha}f(x)=\sum\limits_{k=0}^{\infty} C'_{k}[f(x-k\mu)-f(x-(k+1)\mu)],\,f\in C_{0 }^{\infty}(\Omega),
\end{equation}
$$
\, C'_{k}=\frac{\lambda   ^{k+1}\sin\alpha \pi}{\pi}\int\limits_{0}^{\infty}\xi^{\alpha-1}(\xi+\lambda)^{-k-1}d \xi.
$$
Note that analogously to \eqref{27.171} we can extend formula \eqref{27.19} to $L_{2}(\Omega).$
Comparing  formulas \eqref{27.17},\eqref{27.19} we   can check the results
   calculating directly, we  get
$$
C'_{k+1}-C'_{k}
=-\frac{\lambda   ^{k+1}\sin\alpha \pi}{\pi} \int\limits_{0}^{\infty}\xi^{\alpha }(\xi+\lambda)^{-k-2}d \xi=
-\frac{\alpha\Gamma(k+1 -\alpha)}{(k+1)!\Gamma(1 -\alpha)}\lambda^{\alpha}=C_{k+1},
 \,C'_{0}=C_{0},\,k\in \mathbb{N}_{0}.
$$
Observe that by virtue of the made assumptions regarding   $Q,$ we have $\mathfrak{H}_{Q}\subset\subset L_{2}(\Omega).$ Choose the space  $L_{2}(\Omega)$ as a space $\mathfrak{H}$ and the space  $\mathfrak{H}_{Q} $ as a space $\mathfrak{H}_{+}.$  Let $ S:=Z^{\alpha}_{aI,bI}(A),\,T:=Q^{\ast}N Q.$  Applying the reasonings of Theorem  \ref{T1}, we conclude that there exists a set $\mathfrak{M}:=\mathrm{D}_{0}(Q),$ which is dense in $\mathfrak{H}_{Q},$ such that the operators $S,T$ are defined on its elements.
        Thus, we obtain the fulfilment of condition H1.
Since the operator $N$ is bounded, then
$
|(Tf,g)|_{L_{2}}\leq\|N\| \cdot \|f\|_{\mathfrak{H}_{Q}}\|g\|_{\mathfrak{H}_{Q}}.
$
Using formula \eqref{27.171}, we can easily obtain
$
|(Sf,g)|_{L_{2}}\leq \sigma \|f\|_{L_{2}}\|g\|_{L_{2}}\leq \sigma \|Q^{-1}\|^{2}\cdot\|f\|_{\mathfrak{H}_{Q}}\|g\|_{\mathfrak{H}_{Q}},\,\sigma=4\lambda\|a\|_{L_{\infty}}+ \|b\|_{L_{\infty}}\sum _{k=0}^{\infty} |C_{k}|.
$
Using the strictly accretive property of the operator $N$ we get
$
\mathrm{Re}(Tf,f)\geq \gamma_{N}\|f\|^{2}_{\mathfrak{H}_{Q}}.
$
On the other hand
$
\mathrm{Re}(Sf,f)\geq -\sigma \|Q^{-1}\|^{2}\cdot\|f\|^{2}_{\mathfrak{H}_{Q}},
$
hence condition H2 is satisfied. The proof is complete.

\end{proof}

\section{Conclusions}
In this paper, we   studied  a true   mathematical nature of a differential operator   with a fractional derivative in final terms. We constructed a   model  in terms of the infinitesimal generator of a corresponding semigroup and    successfully applied   spectral theorems. Further, we generalized the obtained results to some class of transforms of  m-accretive operators, what can be treated as  an introduction to the fractional calculus of m-accretive operators. As a concrete theoretical achievement  of the offered  approach, we have  the following results: an  asymptotic equivalence between   the
real component of a  resolvent and the resolvent of the real component was established  for the  class; a  classification,   in accordance with     resolvent  belonging      to   the  Schatten-von Neumann  class, was obtained;
  a  sufficient condition of completeness of the root vectors system were formulated; an asymptotic formula for the eigenvalues was obtained. As an application,
 there were considered cases corresponding to a finite and infinite measure as well as various notions of fractional derivative under the semigroup  theory  point of view, such operators as a  Kipriyanov operator, Riesz potential,  difference operator were involved. The eigenvalue problem for
  a differential operator   with a composition of fractional integro-differential operators  in final terms was solved.

 In addition, note that  minor  results  are also worth noticing such as  a generalization of the well-known von Neumann theorem   (see  the proof of    Theorem  \ref{T1}).    In   section 3,   it  might have been    possible to   consider an unbounded domain  $\Omega$ with some restriction imposed upon    a solid angle  containing  $\Omega,$ due to this natural  way we come to a   generalization of the Kipriyanov operator.   We should add  that  various   conditions, that may be imposed on the operator $F,$ are worth studying separately  since there is a number   of applications in  the theory of fractional differential equations.

\vspace{0.5 cm}
\noindent{\bf Acknowledgments}\\

 The author warmly thanks  academician  Andrey A. Shkalikov for  valuable comments and remarks.

 Gratitude is expressed to  professor Virginia  Kiryakova for a kindly given   invaluable bibliographic survey.

\end{document}